\documentclass[a4paper,12pt]{article}
\usepackage[utf8]{inputenc}
\usepackage{amsmath}
\usepackage{amssymb}
\usepackage{amsthm}
\usepackage{mathtools}
\usepackage{cite}
\usepackage{booktabs}

\usepackage{multirow}
\usepackage{hyperref}
\usepackage{bbm}

\title{Numerical Treatment of Non-local Integral Operators in the Framework of Evolutionary Equations}
\author{Sebastian Franz\footnote{
          Institute of Scientific Computing, TU Dresden, Germany.
          \mbox{e-mail}: sebastian.franz@tu-dresden.de}
        \and
        Sascha Trostorff\footnote{
           Mathematisches Seminar, CAU Kiel, Germany.
           \mbox{e-mail}: trostorff@math.uni-kiel.de}
       }
\date{\today}

\setlength{\paperheight}{29.7cm}
\setlength{\paperwidth}{21cm}
\setlength{\voffset}{0cm}
\setlength{\hoffset}{0cm}
\setlength{\textwidth}{16cm}
\setlength{\textheight}{22.5cm}
\setlength{\oddsidemargin}{0cm}
\setlength{\evensidemargin}{0cm}
\setlength{\topmargin}{0cm}
\setlength{\headheight}{14.5pt}
\setlength{\headsep}{0.5cm}
\setlength{\topskip}{0.5cm}
\setlength{\footskip}{1cm}
\setlength{\tabcolsep}{0.2cm}
\setlength{\parindent}{0pt}


\usepackage{fancyhdr} 
\fancyhf{} 
\fancyhead[R]{\today}
\fancyhead[L]{\texttt{\jobname}}
\fancyfoot[C]{\thepage}
\usepackage{xparse}

\newcommand{\scp}[2][H]{\left\langle #2 \right\rangle_{#1}}

\newcommand{\scprm}[1]{\langle #1 \rangle_{\rho,m}}

\newcommand{\N}{\mathbb{N}}
\newcommand{\R}{\mathbb{R}}
\newcommand{\PS}{\mathbb{P}}
\newcommand{\U}[2][q]{\PS_{#1}^{\textrm{disc}}(#2)}

\newcommand{\tmi}{{t_{m,i}}}
\newcommand{\Qm}[2][m]{Q_{#1}\left[#2\right]}
\newcommand{\Qmr}[2][\rho]{Q_m\left[#2\right]_{#1}}
\DeclareMathOperator{\Div}{{div}}
\DeclareMathOperator{\Curl}{{curl}}
\newcommand{\Grad}{\nabla}
\newcommand{\lin}{\mathrm{lin}\,}
\DeclareMathOperator*{\esssup}{{ess-sup}}
\newcommand{\dom}{\operatorname{dom}}

\newcommand{\e}{\mathrm{e}}
\newcommand{\id}[1]{\mathbbm{1}_{#1}}
\newcommand{\pt}{\partial}
\newcommand{\dt}{\,\mathrm{d}t}
\newcommand{\ds}{\,\mathrm{d}s}
\newcommand{\norm}[2]{\|{#1}\|_{#2}}
\newcommand{\lrarrow}{\quad\Leftrightarrow\quad}
\newcommand{\rarrow}{\quad\Rightarrow\quad}
\newcommand{\jump}[1]{[\hspace*{-2pt}[#1]\hspace*{-2pt}]}
\newcommand{\pmtrx}[1]{\ensuremath{\begin{pmatrix}#1 \end{pmatrix}}}
\newcommand{\spmtrx}[1]{\ensuremath{\left(\begin{smallmatrix}#1 \end{smallmatrix}\right)}}
\newcommand{\vecsymb}[1]{\boldsymbol{#1}}
\newcommand{\vx}{\vecsymb{x}}
\newcommand{\vq}{\vecsymb{q}}
\newcommand{\ord}[1]{\mathcal{O}\left(#1\right)}


\theoremstyle{plain}
\newtheorem{thm}{Theorem}[section]
\newtheorem{lem}[thm]{Lemma}
\newtheorem{prop}[thm]{Proposition}

\theoremstyle{definition}
\newtheorem{defi}[thm]{Definition}
\newtheorem{rem}[thm]{Remark}
\newtheorem{prob}[thm]{Problem}
\begin{document}
  \pagestyle{fancy}
  \maketitle
  \begin{abstract}
    Using the theory of evolutionary equations, we consider abstract differential equations including non-local integral operators.  After providing a condition
    for the well-posedness of the addressed equation we consider a numerical method of approximating its solution. We provide convergence
    proofs under conditions on the kernel of the integral operator and the solution and finish the paper with
    some simulation results.
  \end{abstract}

  \textit{AMS subject classification (2010):} 65J08, 65J10, 65M12, 65M60

  \textit{Key words:} integral operator, evolutionary equations, finite element method
     
  \section{Introduction}
  
  In 2009 Rainer Picard published his seminal paper \cite{Picard} where an abstract Hilbert space framework for dealing with a huge class of linear differential equations (which arise in mathematical physics) was presented. In its simplest form, the equations under consideration are given by
  \begin{equation}\label{eq:evo_eq}
    \overline{(\pt_t M_0+M_1+A)}U=F,
  \end{equation}
  where $A$ is (usually) an unbounded operator on a Hilbert space $H$, incorporating the differential operators with respect to the spatial variables, and $M_0$ and $M_1$ are bounded operators on $H$ incorporating physical quantities of the underlying medium (e.g. denisty, heat conductivity, permeability etc.). All operators, including the temporal derivative $\pt_t$ are established in a weighted $L^2$-space, given by
  \[
    L^{2,\rho}(\R;H):=
      \left\{ 
          f:\R\to H\,:\, f \mbox{ meas.}, 
          \|f\|_{2,\rho}^2\coloneqq \int_\R \|f(t)\|_H^2 \exp(-2\rho t) \dt<\infty
      \right\},
  \]
  where $\rho\in \R$. The main result of \cite{Picard} is the well-posedness of \eqref{eq:evo_eq} under mild conditions on the operators involved:
  \begin{thm}\label{thm:sol_theory}
   Let $A:\dom(A)\subseteq H\to H$ be skew-selfadjoint, $M_0,M_1$ bounded operators on $H$ such that $M_0$ is selfadjoint. Moreover, assume that there exists $\rho_0\in \R$ and $c>0$ such that
   \[
    \rho \langle M_0 x,x\rangle_H+\frac{1}{2} \langle (M_1+M_1^\ast) x,x\rangle_H \geq c\|x\|^2_H \quad (\rho\geq \rho_0, x\in H).
   \]
    Then for each $\rho\geq \rho_0$ the problem \eqref{eq:evo_eq} is well-posed in the sense, that for each $F\in L^{2,\rho}(\R;H)$ there exists a unique $U\in L^{2,\rho}(\R;H)$ satisfying \eqref{eq:evo_eq} and
    \[
     \|U\|_{2,\rho}\leq \frac{1}{c} \|F\|_{2,\rho}.
    \]
   Moreover, one has causality; that is, if $F=0$ on $]-\infty,a]$ for some $a\in \R$, then also $U=0$ on $]-\infty,a]$.
  \end{thm}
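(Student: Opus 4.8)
The plan is to reduce the problem, via the Fourier--Laplace transform, to a frequency-wise inversion in $H$. Let $\mathcal F$ denote the $H$-valued Fourier transform and set $\mathcal L_\rho f\coloneqq\mathcal F\bigl(\e^{-\rho\,\cdot}f\bigr)$, a unitary map $L^{2,\rho}(\R;H)\to L^2(\R;H)$ under which $\pt_t$ becomes multiplication by $\xi\mapsto\mathrm i\xi+\rho$. Since $M_0,M_1$ and $A$ act only in the $H$-variable (in particular $M_0$ commutes with $\pt_t$), a standard density argument shows that $B\coloneqq\overline{(\pt_t M_0+M_1+A)}$ is unitarily equivalent to the maximal multiplication operator $M_T$ on $L^2(\R;H)$ given by $(M_Tg)(\xi)=T_{\mathrm i\xi+\rho}\,g(\xi)$, where $T_z\coloneqq zM_0+M_1+A$ on $H$ with $\dom(T_z)=\dom(A)$. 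It therefore suffices to prove that $T_z$ is boundedly invertible on $H$ with $\|T_z^{-1}\|\le 1/c$, uniformly for $\operatorname{Re}z\ge\rho_0$; reassembling then gives $\|B^{-1}\|\le 1/c$ on $L^{2,\rho}(\R;H)$, so that $U\coloneqq B^{-1}F$ is the unique solution of \eqref{eq:evo_eq} with $\|U\|_{2,\rho}\le\frac1c\|F\|_{2,\rho}$.

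The heart of the matter is the numerical-range estimate for $T_z$. For $x\in\dom(A)$ and $\operatorname{Re}z=\rho$,
\[
  \operatorname{Re}\scpr{T_zx,x}_H=\rho\,\scpr{M_0x,x}_H+\tfrac12\scpr{(M_1+M_1^\ast)x,x}_H\ge c\|x\|_H^2,
\]
because $\operatorname{Im}z$ drops out ($\scpr{M_0x,x}_H\in\R$, as $M_0$ is selfadjoint), $\operatorname{Re}\scpr{Ax,x}_H=0$ (as $A$ is skew-selfadjoint), and the remaining two terms are $\ge c\|x\|_H^2$ by hypothesis. The adjoint is $T_z^\ast=\bar z M_0+M_1^\ast-A$ with $\dom(T_z^\ast)=\dom(A)$ (only everywhere-defined bounded operators are added to the closed operator $A$), and the identical computation gives $\operatorname{Re}\scpr{T_z^\ast x,x}_H\ge c\|x\|_H^2$. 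Hence $\|T_zx\|_H\ge c\|x\|_H$ and $\|T_z^\ast x\|_H\ge c\|x\|_H$: the first makes $T_z$ injective with closed range, the second makes $\ker T_z^\ast=\{0\}$, so $\operatorname{ran}T_z=(\ker T_z^\ast)^\perp=H$ and $T_z$ is bijective with $\|T_z^{-1}\|\le 1/c$. As this bound depends only on $\operatorname{Re}z$, we obtain existence, uniqueness and the norm estimate for every $\rho\ge\rho_0$.

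It remains to establish causality. Since $z\mapsto T_z$ is affine, $z\mapsto T_z^{-1}$ is a bounded-operator-valued holomorphic function on $\{\operatorname{Re}z>\rho_0\}$, continuous up to the boundary and bounded there by $1/c$; by the operator-valued Paley--Wiener theorem it is the Fourier--Laplace symbol of a \emph{causal} convolution operator, which must then coincide with $B^{-1}$, so that $F=0$ on $]-\infty,a]$ forces $U=B^{-1}F=0$ on $]-\infty,a]$. Alternatively and more elementarily: $F$ being supported in $[a,\infty)$ gives $\|F\|_{2,\rho}\le\e^{-(\rho-\rho_0)a}\|F\|_{2,\rho_0}$, hence $F\in L^{2,\rho}(\R;H)$ for all $\rho\ge\rho_0$, and since the transforms $\mathcal L_\rho F$ are boundary values of one holomorphic function so are the $\mathcal L_\rho U$; thus $U$ lies in $L^{2,\rho}(\R;H)$ for all $\rho\ge\rho_0$ with $\|U\|_{2,\rho}\le\frac1c\e^{-(\rho-\rho_0)a}\|F\|_{2,\rho_0}$, and letting $\rho\to\infty$ in $\e^{-2\rho d}\int_b^{d}\|U(t)\|_H^2\dt\le\|U\|_{2,\rho}^2$ (valid for $b<d<a$) forces $\int_b^{d}\|U(t)\|_H^2\dt=0$, i.e.\ $U=0$ on $]-\infty,a]$.

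I expect the main obstacle to be the operator-theoretic packaging rather than any calculation: establishing the unitary equivalence between $\pt_t$ and multiplication by $\xi\mapsto\mathrm i\xi+\rho$, and, crucially, that the closure $\overline{(\pt_t M_0+M_1+A)}$ corresponds exactly to the maximal multiplier $M_T$ (equivalently, in a purely domain-based proof, correctly identifying the adjoint of that closure so that the lower bound for the formal adjoint may be invoked), together with the Paley--Wiener identification used for causality. The numerical-range estimate itself is a one-line computation.
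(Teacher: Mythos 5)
The paper does not prove this theorem at all: it is quoted from \cite{Picard} as the foundation on which Section~2 builds, so there is no in-paper proof to compare against. Your argument is, in substance, the standard proof of Picard's theorem from that reference and the subsequent evolutionary-equations literature: unitary Fourier--Laplace transform $\mathcal L_\rho$ sending $\pt_t$ to multiplication by $\mathrm i\xi+\rho$, the numerical-range estimate $\operatorname{Re}\langle T_z x,x\rangle_H\ge c\|x\|_H^2$ for $T_z=zM_0+M_1+A$ \emph{and} for $T_z^\ast=\bar zM_0+M_1^\ast-A$ giving bijectivity with $\|T_z^{-1}\|\le 1/c$ uniformly in $\operatorname{Re}z\ge\rho_0$, and causality from boundedness and holomorphy of $z\mapsto T_z^{-1}$ on the half-plane. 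All the computations you give are correct, and the two steps you yourself flag are indeed the only places where real work remains. For the identification of $\overline{\pt_tM_0+M_1+A}$ with the maximal multiplier: the conjugated unclosed operator is a restriction of the maximal multiplication operator $M_T$, which is bijective with inverse the bounded multiplier $\xi\mapsto T_{\mathrm i\xi+\rho}^{-1}$; since the closure is injective with closed range (the accretivity estimate survives closure) it suffices to check $\ker$ of its adjoint is trivial, which follows by testing against $\varphi\otimes x$ with $\varphi\in C_c^\infty$, $x\in\dom(A)$, and using surjectivity of $T_{\mathrm i\xi+\rho}$ pointwise. Alternatively, one can run the accretivity estimates entirely in the time domain via $\operatorname{Re}\langle\pt_t M_0u,u\rangle_{2,\rho}=\rho\langle M_0u,u\rangle_{2,\rho}$, which sidesteps the identification. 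For causality, your ``elementary'' variant silently uses that the solutions in $L^{2,\rho}$ and $L^{2,\rho'}$ coincide for a right-hand side lying in both spaces; your holomorphy remark is exactly the standard way to justify this, but it deserves to be a lemma rather than an aside. Neither point is a mathematical gap, only an expository one.
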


  
  The basic theory has later been extended to many more fields of application,
   see e.g.~\cite{McGPTW20, STW22, Trostorff2013, PTWW2013, Trostorff2020}.\\
   In this paper we consider an extension
  including an integral term: 
  For a given $F\in L^{2,\rho}(\R;H)$ find $U\in L^{2,\rho}(\R;H)$, such that
  \begin{equation}\label{eq:problem}
    \overline{(\pt_t M_0+M_1+A+T_K)}U=F,
  \end{equation}
  where
  \[
    T_KU(t)=\int_{-\infty}^t K(t,s)U(s)\ds
  \]
  is an integral operator with a given kernel $K$. In \cite{Trostorff2015} the setting of $T_K$ being a convolution
  operator was considered (see also \cite{Trostorff_Habil} and \cite{PTW2015} for the treatment of fractional derivatives
  and integrals). Here we allow $K$ to be more general and analyse in Section~\ref{sec:wellposedness} the well-posedness of the problem.
  
  Problems involving integral operators with kernels include Volterra and Fredholm integro-differential equations,
  convolution kernels, and weakly singular kernels, such as those used in fractional derivatives of Caputo or Riemann--Liouville type.
  There are many papers in the literature on how to numerically approximate solutions to these problems. We will mention some of them in the following. 
  Time-stepping methods such as Runge--Kutta or multistep methods can be used for Volterra integro-differential equations, see e.g \cite{ZhangVandewalle2006},
  or in the form of convolution quadrature using the Laplace transform of the kernel for convolution kernels, see e.g. \cite{Lubich2004,Schaedle2006}.
  An alternative approach uses collocation methods with continuous or discontinuous 
  piecewise polynomial functions, see, e.g., the monograph \cite{Brunner2004} or for spectral collocation methods the paper \cite{Sheng2014}.
  Fractional integro-differential equations have their own numerical schemes, the most prominent of which is the L1 scheme, 
  see, e.g., \cite{KopMC19} or the standard reference \cite{Diet10}.
  
  In this paper, we will use a different numerical method.
  We will approximate problem \eqref{eq:problem} on a finite time interval and consider as domain of interest
  $[0,T]\times\Omega$, where $T>0$ is a fixed end-time and $\Omega\subset\R^d$ is a given domain such that $H=L^2(\Omega)^n$ for some $n\in \N$.
  The numerical method will then be a Galerkin finite element method using conforming discrete spaces in the spatial variable 
  and a discontinuous Galerkin setting for the time variable. This is again a time-stepping method, but of variational type.

  We will extend
  our convergence theory from \cite{FrTW16} and provide the setting of the method in Section~\ref{sec:method}
  before we analyse its convergence behaviour in Section~\ref{sec:numanalysis}. We close the paper with some examples
  in Section~\ref{sec:examples}.

  \section{Well-posedness}\label{sec:wellposedness}
  In this section we address the well-posedness of \eqref{eq:problem}. For doing so, we first introduce the class of admissible kernels $K$. For this section, we denote
  \[
    \Lambda := \{(t,s) \in \R \times \R \,;\, s \le t\}.
  \]

  \begin{defi}
    For $\rho \in \R$ we define the space
    \begin{align*}
      L^{1,\rho}_{\mathrm{unif}}(\Lambda) :=
      \bigg\{ K : \Lambda \to \R \text{ measurable} \,;\,
      &\esssup_{t \in \R} \int_{-\infty}^{t} |K(t,s)| \e^{-\rho (t-s)} \ds<\infty,
      \\
      &\esssup_{s \in \R} 
      \int_{s}^{\infty} |K(t,s)| \e^{-\rho (t-s)} \dt < \infty
      \bigg\},      
    \end{align*}
    where we identify—as usual—functions which agree almost everywhere. Moreover, we set
    \[
      \norm{K}{1,\rho,\mathrm{unif}} :=
      \max \left\{\esssup_{t \in \R} \int_{-\infty}^{t} |K(t,s)| \e^{-\rho (t-s)} \ds,\;
                  \esssup_{s \in \R} \int_{s}^{\infty} |K(t,s)| \e^{-\rho (t-s)} \dt\right\},
    \]
    for any $K \in L^{1,\rho}_{\mathrm{unif}}(\Lambda)$.
  \end{defi}

  \begin{rem}$\,$
  \begin{enumerate}
    \item[a)] It is straightforward to check that $\left(L^{1,\rho}_{\mathrm{unif}}(\Lambda), \norm{\cdot}{1,\rho,\mathrm{unif}}\right)$ is a Banach space.\\[4pt]
    \item[b)] In \cite{Trostorff2015} the space
    \[
     L^{1,\rho}(\R_{\geq 0})\coloneqq \{\ell : \R_{\geq 0}\to \R \text{ measurable}\,;\, \int_0^\infty |\ell(t)|\e^{-\rho t} \dt <\infty\}
    \]
    was considered for kernels of convolution operators. For $\ell : \R_{\ge 0} \to \R$ measurable, we set
              \[
               K_\ell : \Lambda \to \R, \quad K_\ell(t,s) := \ell(t-s).
              \]
              Then $K_\ell \in L^{1,\rho}_{\mathrm{unif}}(\Lambda)$ if and only if $\ell \in L^{1,\rho}(\R_{\ge 0})$ and
              \[
                \norm{K_\ell}{1,\rho,\mathrm{unif}} = \norm{\ell}{1,\rho}.
              \]
  \end{enumerate}
  \end{rem}

  \begin{lem}
    Let $\rho, \mu \in \R$ with $\rho \le \mu$. Then
    \[
      L^{1,\rho}_{\mathrm{unif}}(\Lambda) \subseteq L^{1,\mu}_{\mathrm{unif}}(\Lambda)
      \quad \text{with} \quad
      \norm{K}{1,\mu,\mathrm{unif}} \le \norm{K}{1,\rho,\mathrm{unif}}
    \]
    for $K \in L^{1,\rho}_{\mathrm{unif}}(\Lambda)$.
  \end{lem}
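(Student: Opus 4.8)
The plan is to exploit the single structural fact that on $\Lambda$ the difference $t-s$ is nonnegative, so that raising the weight exponent only shrinks the integrand. Concretely, for $(t,s)\in\Lambda$ we have $t-s\ge 0$, and since $\rho\le\mu$ this gives $\rho(t-s)\le\mu(t-s)$, hence $-\mu(t-s)\le-\rho(t-s)$ and therefore
\[
  |K(t,s)|\,\e^{-\mu(t-s)} \le |K(t,s)|\,\e^{-\rho(t-s)}
  \qquad \text{for a.e.\ } (t,s)\in\Lambda.
\]
This pointwise bound is the whole engine of the proof; everything else is bookkeeping with the essential suprema.

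Next I would integrate this inequality in the two relevant ways. Fixing $t$ and integrating in $s$ over $(-\infty,t)$ yields, for a.e.\ $t\in\R$,
\[
  \int_{-\infty}^{t} |K(t,s)|\,\e^{-\mu(t-s)}\ds
  \le \int_{-\infty}^{t} |K(t,s)|\,\e^{-\rho(t-s)}\ds
  \le \norm{K}{1,\rho,\mathrm{unif}},
\]
where the last step uses the definition of $\norm{\cdot}{1,\rho,\mathrm{unif}}$ as a maximum of two essential suprema, one of which dominates the middle expression for a.e.\ $t$. Taking the essential supremum over $t$ shows the first defining quantity for $\mu$ is finite and bounded by $\norm{K}{1,\rho,\mathrm{unif}}$. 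Symmetrically, fixing $s$ and integrating in $t$ over $(s,\infty)$ and then taking the essential supremum over $s$ controls the second defining quantity by the same bound.

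Finally I would combine the two estimates: since $\norm{K}{1,\mu,\mathrm{unif}}$ is the maximum of the two quantities just bounded, and both are $\le \norm{K}{1,\rho,\mathrm{unif}}$, we conclude $\norm{K}{1,\mu,\mathrm{unif}}\le\norm{K}{1,\rho,\mathrm{unif}}<\infty$, which simultaneously establishes $K\in L^{1,\mu}_{\mathrm{unif}}(\Lambda)$ and the claimed norm inequality, hence the inclusion of spaces. I do not anticipate any real obstacle here; the only point requiring a little care is keeping the ``a.e.'' qualifiers straight when passing from the pointwise inequality to statements about essential suprema, so that the two ess-sup estimates are asserted on the correct (full-measure) sets before taking the outer supremum.
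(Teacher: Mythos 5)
Your argument is correct and coincides with the paper's own proof: both rest on the pointwise monotonicity $\e^{-\mu(t-s)}\le\e^{-\rho(t-s)}$ for $s\le t$, integrated in each of the two variables and followed by taking essential suprema. Your added care with the a.e.\ qualifiers is fine but does not change the substance.
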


  \begin{proof}
    Let $K \in L^{1,\rho}_{\mathrm{unif}}(\Lambda)$. Then for $t \in \R$
    \[
      \int_{-\infty}^{t} |K(t,s)| \e^{-\mu (t-s)} \ds \le
      \int_{-\infty}^{t} |K(t,s)| \e^{-\rho (t-s)} \ds,
    \]
    as well as
    \[
      \int_{s}^{\infty} |K(t,s)| \e^{-\mu (t-s)} \dt \le
      \int_{s}^{\infty} |K(t,s)| \e^{-\rho (t-s)} \dt
    \]
    for $s \in \R$. These inequalities yield the assertion.
  \end{proof}

  \begin{prop}\label{prop:Opbound}
    Let $\rho \ge 0$ and $K \in L^{1,\rho}_{\mathrm{unif}}(\Lambda)$. We set
    \[
      T_K : L^{2,\rho}(\R) \to L^{2,\rho}(\R), \qquad
      (T_K f)(t) := \int_{-\infty}^{t} K(t,s) f(s)\, \ds.
    \]
    Then $T_K$ is well-defined and bounded with
    \[
      \norm{T_K}{} \le \norm{K}{1,\rho,\mathrm{unif}}.
    \]
    Moreover, $T_K$ is causal in the sense that, if $f$ vanishes on some interval $]-\infty,a]$ for some $a\in \R$, then so does $T_Kf$.
  \end{prop}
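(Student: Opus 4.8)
The plan is to reduce the claimed norm bound to a Schur-test argument. Fix $f \in L^{2,\rho}(\R)$ and write $g := T_K f$. The natural first step is to pass to the unweighted picture: with $\tilde f(t) := f(t)\e^{-\rho t}$ and $\tilde g(t) := g(t)\e^{-\rho t}$ one has $\|\tilde f\|_2 = \|f\|_{2,\rho}$, $\|\tilde g\|_2 = \|g\|_{2,\rho}$, and a direct computation gives
\[
  \tilde g(t) = \int_{-\infty}^{t} K(t,s)\e^{-\rho(t-s)} \tilde f(s)\,\ds = \int_{\R} \widetilde K(t,s)\,\tilde f(s)\,\ds,
\]
where $\widetilde K(t,s) := K(t,s)\e^{-\rho(t-s)}\id{\Lambda}(t,s)$. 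Thus $T_K$ is unitarily equivalent (via multiplication by $\e^{-\rho\,\cdot\,}$) to the ordinary integral operator on $L^2(\R)$ with kernel $\widetilde K$, and the hypothesis $K \in L^{1,\rho}_{\mathrm{unif}}(\Lambda)$ says exactly that
\[
  \esssup_{t\in\R}\int_\R |\widetilde K(t,s)|\,\ds \le \norm{K}{1,\rho,\mathrm{unif}}, \qquad
  \esssup_{s\in\R}\int_\R |\widetilde K(t,s)|\,\dt \le \norm{K}{1,\rho,\mathrm{unif}}.
\]

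Next I would invoke (or quickly reprove) the Schur test: an integral operator on $L^2(\R)$ whose kernel has row-integral bound $C_1$ and column-integral bound $C_2$ is bounded with operator norm at most $\sqrt{C_1 C_2}$. The short proof uses Cauchy--Schwarz in the form
\[
  |\tilde g(t)|^2 \le \left(\int_\R |\widetilde K(t,s)|\,\ds\right)\left(\int_\R |\widetilde K(t,s)|\,|\tilde f(s)|^2\,\ds\right) \le C_1 \int_\R |\widetilde K(t,s)|\,|\tilde f(s)|^2\,\ds,
\]
followed by integration in $t$ and Tonelli's theorem to swap the order, bounding $\int_\R |\widetilde K(t,s)|\,\dt \le C_2$. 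Here $C_1 = C_2 = \norm{K}{1,\rho,\mathrm{unif}}$, giving $\|\tilde g\|_2 \le \norm{K}{1,\rho,\mathrm{unif}} \|\tilde f\|_2$, hence $\norm{T_K f}{2,\rho} \le \norm{K}{1,\rho,\mathrm{unif}} \norm{f}{2,\rho}$. Along the way one also gets that $\tilde g(t)$ is finite for a.e.\ $t$ and measurable, so $T_K f$ is well-defined as an element of $L^{2,\rho}(\R)$.

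Causality is immediate from the support of the kernel: if $f = 0$ on $]-\infty,a]$, then for $t \le a$ the integrand $K(t,s)f(s)$ vanishes for a.e.\ $s \le t$ (since $s \le t \le a$ forces $f(s)=0$), so $(T_K f)(t) = 0$ for a.e.\ $t \le a$. The only mild technical point — and the closest thing to an obstacle — is the bookkeeping to ensure the row bound $C_1$ may legitimately be pulled out before the Cauchy--Schwarz split and that all the resulting double integrals are covered by Tonelli (which is fine, as everything is nonnegative and measurable on $\Lambda$); there is no serious difficulty here, since the uniform $L^1$ assumption is precisely tailored to make the Schur test applicable.
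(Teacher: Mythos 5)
Your proposal is correct and is essentially the paper's own argument: the paper proves the bound by writing $|K(t,s)|\e^{-\rho(t-s)}$ as a product of two square roots, applying Cauchy--Schwarz, squaring, integrating in $t$, and swapping the order of integration — which is exactly the Schur-test computation you describe, just carried out with the exponential weights in place rather than after conjugating by multiplication with $\e^{-\rho\,\cdot}$. The only cosmetic difference is that the paper first checks pointwise well-definedness on $C_c(\R)$ and treats causality as obvious, whereas you obtain a.e.\ finiteness directly from the Schur estimate; both are fine.
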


  \begin{proof}
    Let $f \in C_c(\R)$ and set $a := \inf \operatorname{spt} f$. Then we estimate for $t \ge a$
    \[
      \int_{-\infty}^{t} |K(t,s)|\,|f(s)|\,\ds
        \leq \norm{f}{\infty} \int_a^t |K(t,s)|\,\ds
        \leq \norm{f}{\infty} \int_a^t |K(t,s)| \e^{-\rho (t-s)} \e^{\rho (t-a)} \,\ds < \infty,
    \]
    which shows that $\int_{-\infty}^{t} K(t,s)f(s)\, \ds$ exists for each $t \in \R$ (for $t < a$ the integral is zero).

    Now we estimate for $t \in \R$
    \begin{align*}
      \bigg| \int_{-\infty}^{t}&K(t,s)f(s)\, \ds \, \e^{-\rho t} \bigg|\\
        &= \left| \int_{-\infty}^{t} K(t,s) \e^{-\rho (t-s)} f(s)\e^{-\rho s} \ds \right| \\
        &\leq \int_{-\infty}^{t} |K(t,s)|^{1/2} \e^{-\rho (t-s)/2}
              |K(t,s)|^{1/2}  \e^{-\rho (t-s)/2} |f(s)| e^{-\rho s} \ds \\
        &\leq \left(\int_{-\infty}^{t} |K(t,s)| \e^{-\rho (t-s)} \ds\right)^{1/2}
              \left(\int_{-\infty}^{t} |K(t,s)| \e^{-\rho (t-s)} |f(s)|^2 \e^{-2\rho s} \ds\right)^{1/2}.
    \end{align*}
    Taking squares and integrating with respect to $t$ yields
    \begin{align*}
      \int_{\R} \left| \int_{-\infty}^{t} K(t,s)f(s)\, \ds \right|^2 \e^{-2\rho t} \dt
        &\leq \|K\|_{1,\rho,\mathrm{unif}} \int_{\R} \int_s^{\infty} |K(t,s)| \e^{-\rho (t-s)} \dt |f(s)|^2 \e^{-2\rho s} \ds\\
        &\leq \|K\|_{1,\rho,\mathrm{unif}}^2 \|f\|_{2,\rho}^2,
    \end{align*}
    which yields the asserted continuity estimate. The causality of $T_K$ is obvious.
  \end{proof}

  Now let $H$ be a Hilbert space, $M_0,M_1 \in L(H)$ with $M_0$ selfadjoint and 
  $A:\operatorname{dom}(A)\subseteq H \to H$ a skew-selfadjoint operator,
  such that there exist $\rho_0 \geq 0$, $c>0$ such that for all $\rho \geq \rho_0$
  \[
    \rho M_0 + \frac{1}{2}(M_1+M_1^\ast) \geq c,
  \]
  as in Theorem \ref{thm:sol_theory}.
  \begin{thm}
    Let $\rho_0 \geq 0$ and $K \in L^{1,\rho_0}_{\mathrm{unif}}(\Lambda)$ with
    $\norm{K}{1,\rho_0,\mathrm{unif}} < c$. Then for each $\rho \geq \rho_0$ and
    $F \in L^{2,\rho}(\R;H)$ the equation
    \[
      \overline{(\partial_{t} M_0 + M_1 + A + T_K)}U = F
    \]
    admits a unique solution $U \in L^{2,\rho}(\R;H)$. Moreover, the solution operator
    \[
      S_\rho := \overline{(\partial_{t} M_0 + M_1 + A + T_K)}^{-1}
    \]
    is bounded and causal.
  \end{thm}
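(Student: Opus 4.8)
The plan is to reduce the perturbed equation to the unperturbed one, which is well-posed by Theorem \ref{thm:sol_theory}, and then treat the integral term as a Neumann-series perturbation. Fix $\rho \ge \rho_0$. By Theorem \ref{thm:sol_theory}, the operator $B_\rho := \overline{(\partial_t M_0 + M_1 + A)}$ has a bounded inverse $B_\rho^{-1}$ on $L^{2,\rho}(\R;H)$ with $\norm{B_\rho^{-1}}{} \le 1/c$, and $B_\rho^{-1}$ is causal. By the Lemma we have $K \in L^{1,\rho}_{\mathrm{unif}}(\Lambda)$ with $\norm{K}{1,\rho,\mathrm{unif}} \le \norm{K}{1,\rho_0,\mathrm{unif}} < c$; extending Proposition \ref{prop:Opbound} componentwise from $L^{2,\rho}(\R)$ to $L^{2,\rho}(\R;H)$ (apply the scalar estimate to $t \mapsto \norm{U(t)}{H}$), the operator $T_K$ is bounded on $L^{2,\rho}(\R;H)$ with $\norm{T_K}{} \le \norm{K}{1,\rho,\mathrm{unif}}$ and is causal.

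Next I would write $\overline{(\partial_t M_0 + M_1 + A + T_K)} = B_\rho + T_K = B_\rho(I + B_\rho^{-1} T_K)$, using that $T_K$ is a bounded everywhere-defined operator so the closure of the sum is just $B_\rho$ (itself a closed operator) plus the bounded operator $T_K$. Since $\norm{B_\rho^{-1} T_K}{} \le \norm{B_\rho^{-1}}{}\,\norm{T_K}{} \le \norm{K}{1,\rho,\mathrm{unif}}/c < 1$, the operator $I + B_\rho^{-1} T_K$ is boundedly invertible via the Neumann series $\sum_{k=0}^\infty (-B_\rho^{-1} T_K)^k$. Hence $S_\rho = (I + B_\rho^{-1} T_K)^{-1} B_\rho^{-1}$ is a well-defined bounded operator, giving existence and uniqueness of $U = S_\rho F$ for every $F \in L^{2,\rho}(\R;H)$.

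Finally I would verify causality of $S_\rho$. Since both $B_\rho^{-1}$ and $T_K$ are causal, each power $(B_\rho^{-1} T_K)^k$ is causal, and causality is preserved under the (operator-norm convergent) series, so $(I + B_\rho^{-1} T_K)^{-1}$ is causal; composing with the causal operator $B_\rho^{-1}$ keeps $S_\rho$ causal. One should also remark that the solution does not depend on the choice of $\rho$: if $\rho_0 \le \rho \le \mu$ and $F \in L^{2,\rho}(\R;H) \subseteq L^{2,\mu}(\R;H)$, then the two solutions agree, which follows from uniqueness together with the consistency of $B_\rho^{-1}$ and of $T_K$ across the weighted spaces (the latter being immediate from the integral formula). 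The only mild subtlety—and the point I would be most careful about—is the bookkeeping around $\overline{(\cdot)}$: one must argue that adding the bounded operator $T_K$ commutes with taking the closure, i.e. $\overline{B'_\rho + T_K} = \overline{B'_\rho} + T_K$ where $B'_\rho = \partial_t M_0 + M_1 + A$ on its natural domain; this is the standard fact that closure is unaffected by a bounded additive perturbation, but it is the one place where the notation could hide an error.
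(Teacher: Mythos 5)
Your proposal is correct and follows essentially the same route as the paper: both reduce to the unperturbed equation via Picard's theorem, use the monotonicity of $\norm{K}{1,\rho,\mathrm{unif}}$ in $\rho$ and the bound $\norm{T_K}{}\le\norm{K}{1,\rho,\mathrm{unif}}<c$, and invert $I+C_\rho T_K$ by smallness --- the paper phrases this as a contraction-mapping fixed point $U=C_\rho(F-T_KU)$ while you phrase it as the equivalent Neumann series, and the causality arguments (limit of causal iterates versus limit of causal partial sums) coincide as well. Your explicit attention to $\overline{B'_\rho+T_K}=\overline{B'_\rho}+T_K$ is a point the paper passes over with the remark ``since $T_K$ is a bounded operator,'' so no gap there.
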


  \begin{proof}
    From Picard’s Theorem (see Theorem \ref{thm:sol_theory}) we know that
    \[
      C_\rho := \overline{(\partial_{t} M_0 + M_1 + A)}^{-1}
    \]
    is bounded and causal with $\norm{C_\rho}{} \leq \frac{1}{c}$.
    Now we observe that
    \begin{align*}
      \overline{(\partial_{t} M_0 + M_1 + A + T_K)}U = F
      &\lrarrow
      \overline{(\partial_{t} M_0 + M_1 + A)}U = F - T_K U\\
      &\lrarrow
      U = C_\rho(F - T_K U),      
    \end{align*}
    since $T_K$ is a bounded operator. Thus, we are seeking fixed points of the mapping $Q : U \mapsto C_\rho(F - T_K U)$.
    Since $\norm{T_K}{} \leq \norm{K}{1,\rho_0,\mathrm{unif}} < c$, uniqueness and existence follow
    by the contraction mapping principle. 

    Moreover, continuity of $S_\rho$ follows by
    \[
      \norm{U}{2,\rho} 
        \leq \norm{C_\rho}{}(\norm{F}{2,\rho} + \norm{T_K U}{2,\rho})
        \leq \frac{1}{c} \norm{F}{2,\rho} + \frac{\norm{K}{1,\rho_0,\mathrm{unif}}}{c}\norm{U}{2,\rho},
    \]
    and hence,
    \[
      \norm{U}{2,\rho} 
        \leq \frac{1}{c - \norm{K}{1,\rho_0,\mathrm{unif}}}\norm{F}{2,\rho}.
    \]
    Finally, to show causality, assume that $F$ vanishes on some interval $]-\infty,a]$ for some $a\in \R$.
    Since $C_\rho$ and $T_K$ are causal, we infer that $Q$ maps functions vanishing on $]-\infty,a]$ to functions vanishing on the same set.
    Since $U = \lim_{n \to \infty} Q^n(0)$, we infer $U=0$ on $]-\infty,a]$ as well.
  \end{proof}

  \begin{rem}
    For kernels $K_\ell(t,s) := \ell(t-s)$ with $\ell \in L^{1,\mu}(\R_{\geq 0})$ for some $\mu \in \R$,
    the assumption $\norm{K_\ell}{1,\rho_0,\mathrm{unif}} < c$ is always satisfied if we choose $\rho_0$ large enough.
    Indeed, since
    \[
      \norm{K_\ell}{1,\rho,\mathrm{unif}}
        = \norm{\ell}{1,\rho} 
        = \int_0^\infty |\ell(t)| e^{-\rho t} \, \dt,
    \]
    the claim follows by the monotone convergence theorem.

  \end{rem}

  \begin{prob}
   So far, the authors could neither prove nor disprove that $\|K\|_{1,\rho,\mathrm{unif}} \to 0$ for $\rho \to \infty$ holds for general
    kernels $K \in L^{1,\rho_0}_{\mathrm{unif}}(\Lambda)$. We leave this as an open question.
  \end{prob}

  \section{Numerical method}\label{sec:method}
  The numerical method used in this work is a combination of a discontinuous Galerkin finite element method in time
  and a conforming Galerkin finite element method in space. Both types of methods have a long history. 
  Discontinuous Galerkin methods were first introduced in 1973 \cite{RH73} for neutron transport and later extended 
  to more general problems, see, e.g., \cite{Riviere08}. The Galerkin finite element method is even older, 
  going back to a paper by Galerkin~\cite{Galerkin1915} in 1915. There is a very broad literature on this topic,
  some very good text books are \cite{BreSco02, Ci02, EG21a}. 
  
  The method is the same as in \cite{FrTW16} and we will start the description of the time discretisation.   
  We define time points $0=t_0<t_1<\dots<t_M=T$ and corresponding time intervals $I_m=(t_{m-1},t_m]$
  of width $\tau_m=t_m-t_{m-1}$. Let $q\in \N$ be a given polynomial degree in time. Then we define the space 
  \[
    \U{H}\coloneqq 
      \{u\in L^{2,\rho}(\R;H)\,:\,\forall m\in \{1,\ldots,M\}:
        u|_{I_m}\in \PS_q(I_m;H)\},
  \]
  where we denote by 
  \[
    \PS_q(I_m;H)\coloneqq 
      \lin \{ I_m \ni t\mapsto t^k \zeta \in H; k\in \{0,\ldots,q\}, \zeta\in H\}
  \]
  the space of $H$-valued polynomials of degree at most $q$ defined on $I_m$.   
  With the scalar product 
  \[
    \scprm{p,q} \coloneqq 
      \intop_{t_{m-1}}^{t_m} 
      \scp{p(t),q(t)} \exp(-2\rho(t-t_{m-1})) \dt
  \]
  the space $\PS_q(I_m;H)$ becomes a Hilbert space. 
  
  The numerical method itself will
  use a discretised version of that scalar product by applying a quadrature rule instead of the integral.
  We denote by ${\omega}_i^m$ and $\hat{t}^m_i$, $i\in\{0,\dots,q\}$,
  the weights and nodes of a weighted Gau\ss--Radau formula with $q+1$ nodes
  on the reference time interval $\widehat{I}=(-1,1]$, such that
  \[
    \int_{\widehat I} \e^{-2\rho \tau_m (t+1)}p(t)\dt 
      = \sum_{i=0}^q {\omega}^m_ip(\hat{t}^m_i)
  \]
  holds for all polynomials $p$ of degree at most $2q$ (see e.g. \cite{TW2016}).
  
  A standard linear transformation $T_m: \widehat{I}\to I_m$ and
  transformed Gau\ss--Radau points $\tmi := T_m(\hat{t}^m_i)$, 
  $i\in\{0,\dots,q\}$ then give a quadrature formula on $I_m$, defined by
  \[
    \Qmr{v} := \sum_{i=0}^q {\hat\omega}^m_i v(\tmi),
  \]
  where ${\hat \omega}^m_i:=\frac{\tau_m}{2} {\omega}^m_i$.
  It follows 
  $
    \Qmr{p} = \scprm{p,1}
  $
  for all polynomials of degree at most $2q$.
  
  Additionally, we use the same points $\tmi$ in the definition of an interpolation operator 
  into $\U{H}$. Let $\varphi_{m,i}$ with $i\in\{0,\dots,q\}$ be the associated polynomial Lagrange 
  basis functions on $I_m$ to the nodes $\tmi$. Then for a function $v\in C([0,T],H)$ we define by
  \begin{equation}\label{eq:P}
    (Pv)(0) = v(0),\quad
    (Pv)\big|_{I_m}(t)
    = \sum_{i=0}^q v(\tmi)\varphi_{m,i}(t), \qquad m\in\{1,\dots,M\},
  \end{equation}
  an interpolation operator $P$ in time. Note that by definition it holds
  \[
    \Qmr{v}=\Qmr{Pv}.
  \]

  We arrive at a first discrete formulation of \eqref{eq:problem} by considering each interval $I_m$ separately
  and using 
  \[
    \Qmr{a,b}:=\Qmr{\scp{a,b}},
  \]
  instead of the scalar products $\scprm{a,b}$:
  
  For given $F\in \U{H}$ and $x_0\in H$, find $U^\tau\in\U{H}$, such that for 
  all $\Phi\in \U{H}$ and $m\in\{1,2,\dots,M\}$ it holds
  \begin{equation}\label{eq:discr_quad_form}
    \Qmr{(\partial_t M_0+M_1+A+T_K)U^\tau,\Phi}
      +\scp{M_0 \jump{U^\tau}_{m-1}^{x_0},{\Phi}^+_{m-1}}
      =\Qmr{ F,\Phi },
  \end{equation}
  where 
  \[
    \jump{U^\tau}_{m-1}^{x_0}\coloneqq
    \begin{cases}
      U^\tau(t_{m-1}^+)-U^\tau(t_{m-1}-),& m\in\{2,\ldots,M\}\\ 
      U^\tau(t_0^+)-x_0,& m=1,
    \end{cases} 
  \]
  denotes the jump at $t_{m-1}$ and $\Phi^+_{m-1}\coloneqq \Phi(t_{m-1}^+)$ the right-hand side limit of $\Phi$ at $t_{m-1}$.

  In our analysis we will use the discretised norms
  \[
    \norm{v}{Q,\rho,m}^2\coloneqq\Qmr{v,v}
    \quad\text{and}\quad
    \norm{v}{Q,\rho}^2\coloneqq\sum_{m=1}^M\Qmr{v,v}\e^{-2\rho t_{m-1}}
  \]
  as approximations of $\norm{v}{\rho,m}^2\coloneqq \intop_{I_m} |v(t)|_H^2 
  \exp(-2\rho(t-t_{m-1}))\dt$ and $\norm{v}{\rho}^2$. Note that for $v\in\U{H}$ 
  the approximation is exact.

  \begin{rem}\label{rem:Reformulation}
    The original problem can also be posed in an unweighted $L_2$-space instead of the weighted $L_2$-space. 
    Indeed, let $V(t)=\e^{-\rho t}U(t)$ and rewrite the original problem. This leads to the weak formulation
    \[
      \scp[m]{(\pt_t M_0+(\rho M_0+M_1)+A+\widetilde T_K)V,W}=\scp[m]{\widetilde F,W}
    \]
    where $\widetilde F(t)=\e^{\rho t}F(t)$ and $\widetilde T_K V=\int_0^t \e^{\rho(t-s)}K(t,s)V(s)\ds=:\int_0^t\widetilde K(t,s)V(s)\ds$.
    Now the scalar products can be discretised by a standard right-sided Gauß--Radau quadrature rule
    and we obtain \eqref{eq:discr_quad_form} without the weights, but with changed operators that depend on $\rho$.
  \end{rem}
  
  Note that in \eqref{eq:discr_quad_form} the quadrature applied to the first three operators depends only 
  on $U^\tau_m = U^\tau|_{(t_{m-1},t_m]}$, 
  while the one applied to $T_KU^\tau$ is non-local as $T_KU^\tau|_{(t_{m-1},t_m]}$ depends on $U^\tau|_{(0,t_m]}$.
  Therefore, let us rewrite the $T_KU^\tau$ term for $\tmi\in(t_{m-1},t_m]$:
  \begin{align*}
    (T_KU^\tau)(\tmi)
      &= \int_0^{\tmi} K(\tmi,s)U^\tau(s)\ds
       =:\sum_{n=1}^m J_nU^\tau_n(\tmi)
  \end{align*}
  where
  \[
    J_nU^\tau_n(\tmi):=\int_{t_{n-1}}^{\min\{\tmi,t_n\}}K(\tmi,s)U^\tau_n(s)\ds.
  \]
  Then the quadrature formulation for computing $U^\tau_m$ is given by:
  \begin{multline}
    B_m(U^\tau_m,\Phi):=
    \Qmr{(\partial_t M_0+M_1+A)U^\tau_m,\Phi}
    +\Qmr{J_m U^\tau_m,\Phi}\\
      +\scp{M_0 \jump{U^\tau}_{m-1}^{x_0},{\Phi}^+_{m-1}}
      =\Qmr{ F,\Phi }-\sum_{n=1}^{m-1}\Qmr{J_n U^\tau_n,\Phi}.\label{eq:semidiscr_quad_form_final}
  \end{multline}
  
  In practice, we also need to approximate the integral operator. We will use an adaptive quadrature method, 
  that provides results up to machine precision, thus making them almost exact, even for weakly singular kernels, as in the case of fractional integrals. 
  Alternatively, one could use another fixed quadrature rule for this discretisation and analyse its influence.
  
  The method provided so far is only a semi-discretisation in time. For the full discretisation the choice of discrete 
  spaces for the spatial variable depends on the given operator $A$, since we use conforming discrete spaces -- 
  that is to say they are discrete subspaces of $\dom(A)$.
  
  We discretise $\Omega\subseteq \R^d$ into a set $\Omega_h$
  of pairwise disjoint simplices of maximum diameter $h$ covering $\Omega$. For $d=1$ this is similar to the
  partitioning in time.
  In many applications the unbounded operator $A$ is of the form $A=\spmtrx{0&C^*\\-C&0}$, where $C$ is an operator of vector calculus,
  see \cite{Picard,McGPTW20,STW22}. Let us denote its associated solution component by $U_C\in\dom(C)$. 
  
  Below, we provide some examples of discrete subspace of $\dom(C)$ for certain operators $C$,
  along with the associated interpolation operators required for the subsequent numerical analysis.
  
  Suppose $C$ is
  \begin{itemize}
    \item the \textbf{gradient}, then $\dom(C)=H^1(\Omega)$ and the classical choice are continuous, piecewise polynomial Lagrange-elements
          $\PS_k(\Omega_h)$ of maximum degree $k$. As interpolation operator $I$ we can choose a classical pointwise defined interpolation operator 
          or a quasi-interpolator of Scott--Zhang type \cite{SZ90}. For both it holds
          \[
            \norm{U_C-IU_C}{L^2(\Omega)}+\norm{\Grad(U_C-IU_C)}{L^2(\Omega)}
              \leq c h^k,
          \]
          assuming $U_C\in H^{k+1}(\Omega)$.
    \item the \textbf{divergence}, then $\dom(C)=H(\Div,\Omega)$; that is, the space of all $L^2$-vector-fields 
          such that its distributional divergence is represented by an $L^2$-function, and we can use 
          Raviart--Thomas-elements $RT_{k-1}(\Omega_h)=(\PS_{k-1}(\Omega_h))^d+\vx\PS_{k-1}(\Omega_h)$.
          For the standard interpolation operator $I$ into $RT_{k-1}(\Omega_h)$ it holds
          \[
            \norm{U_C-IU_C}{L^2(\Omega)}+\norm{\Div(U_C-IU_C)}{L^2(\Omega)}
              \leq c h^{k},
          \]
          assuming $U_C\in (H^{k}(\Omega))^d$ such that $\Div U_C\in H^{k}(\Omega)$, see e.g.~\cite{EG21a}.
    \item the \textbf{curl}, then $\dom(C)=H(\Curl,\Omega)$ (similarly, this is the space of all $L^2$-vector-fields 
          such that its distributional rotation is represented by an $L^2$-vector-field) and we can use N\'{e}d\'{e}lec elements (of the first kind)
          $N_{k-1}(\Omega_h)$, defined as
          \[
            N_{k-1}(\Omega_h)=(\PS_{k-1}(\Omega_h))^d\oplus S_{k}(\Omega_h),\quad
            S_{k}(\Omega_h)=\{\vq\in(\PS_{k}^H(\Omega_h))^d\colon \vq\cdot\vx=0\},
          \]
          where $\PS_k^H(\Omega_h)$ are the homogeneous polynomials of degree $k$. For the standard interpolation operator $I$ into $N_{k-1}(\Omega_h)$
          it holds
          \[
            \norm{U_C-IU_C}{L^2(\Omega)}+\norm{\Curl(U_C-IU_C)}{L^2(\Omega)}
              \leq c h^k,
          \]
          assuming $U_C\in (H^{k}(\Omega))^d$ such that $\Curl U_C\in (H^{k}(\Omega))^d$, see e.g.~\cite{EG21a}.
  \end{itemize}
  We did not include boundary conditions in above examples, but they can also be implemented in 
  these discrete spaces in a canonical way.
  Choosing for all components of $U$ the corresponding conforming finite element space,
  we have a composite space $V_k(\Omega)$. Denoting the composite interpolation operator into $V_k(\Omega)$ by $I$ too,
  we have
  \begin{gather}
    \norm{U-IU}{H}+\norm{A(U-IU)}{H}
      \leq c h^k,\label{eq:interspace}
  \end{gather}
  if $U\in H^k(\Omega)$ and $AU\in H^k(\Omega)$, where we omitted the dimension of the spaces. 
  
  Our fully discrete space can now be defined by
  \[
    \U{V_k}
      :=\left\{
          U\in H_\rho(\R,H):
          U|_{I_m}\in\PS_q(I_m,V_k(\Omega)),
          m\in\{1,\dots,M\}
       \right\},
  \]
  which does not have to be continuous in time. Now \eqref{eq:semidiscr_quad_form_final} together with the space $\U{V_k}$ instead of $\U{H}$
  gives the final numerical method for this paper: Find $U^{\tau,h}\in\U{V_k}$, such that for all $1\leq m\leq M$ and $\Phi\in \U{V_k}$ it holds
  \begin{multline}
    B_m(U^{\tau,h}_m,\Phi):=
    \Qmr{(\partial_t M_0+M_1+A)U^{\tau,h}_m,\Phi}
    +\Qmr{J_m U^{\tau,h}_m,\Phi}\\
      +\scp{M_0 \jump{U^{\tau,h}}_{m-1}^{x_0},{\Phi}^+_{m-1}}
      =\Qmr{ F,\Phi }-\sum_{n=1}^{m-1}\Qmr{J_n U^{\tau,h}_n,\Phi}.\label{eq:discr_quad_form_final}
  \end{multline}

  \section{Numerical Analysis}\label{sec:numanalysis}
  We follow the standard procedure for estimating errors by first separating them into interpolation 
  errors and a discrete errors, using the triangle inequality:
  \[
    \norm{U^{\tau,h}-U}{\rho}\leq \norm{U^{\tau,h}-PIU}{\rho}+\norm{U-PIU}{\rho}=:\norm{\xi}{\rho}+\norm{\eta}{\rho},
  \]
  where $\xi\coloneqq U^{\tau,h}-PIU$ and $\eta\coloneqq U-PIU$. Note that $U^{\tau,h}-U=\xi-\eta$ and
  $\xi\in\U{V_k}$. We will derive an error equation for this function, that couples
  it, via the bilinear form, to the interpolation error $\eta$. The final step is then to estimate the 
  interpolation error.
  
  We also provide some details on how the method is implemented.
  
  \subsection{Error equation and coercivity}
  The discrete formulation \eqref{eq:discr_quad_form_final} is not only solved by the discrete function $U^{\tau,h}$, but also 
  by the exact solution $U$ of \eqref{eq:problem}:
  \[
    \overline{(\pt_t M_0+M_1+A+T_K)}U=F.
  \]
  So, by subtracting \eqref{eq:discr_quad_form_final} for $U^{\tau,h}$ and $U$ for the test function $\xi$,
  and splitting the result into discrete and interpolation error,
  we obtain the \textbf{error equation}
  \begin{multline}\label{eq:error}
    E_\textnormal{d}^m
    :=\Qmr{(\partial_t M_0+M_1)\xi,\xi}
     +\scp{M_0 \jump{\xi}_{m-1}^0,\xi^+_{m-1}}+\Qmr{T_K\xi,\xi}\\
     =\Qmr{(\partial_t M_0+M_1+A)\eta,\xi}
     +\scp{M_0 \jump{\eta}_{m-1}^0,\xi^+_{m-1}}+\Qmr{T_K\eta,\xi}
     =:E_\textnormal{i}^m
  \end{multline}
  for all $m\in\{1,\ldots,M\}$.
  Note that we used $\Qmr{A\xi,\xi}=0$, because $A$ is skew-selfadjoint, to simplify the equation.
  The analysis of \cite[Lemma 3.1]{FrTW16} 
  is directly applicable and yields
  \begin{multline*}
    \Qmr{(\partial_t M_0+M_1)\xi,\xi}+\scp{M_0 \jump{\xi}_{m-1}^0,\xi^+_{m-1}} \\
    \geq \gamma\norm{\xi}{\rho,m}^2+
         \frac{1}{2}
              \left[
                 \scp{M_0\xi^-_m,\xi^-_m}\e^{-2\rho\tau_m}
                -\scp{M_0\xi^-_{m-1},\xi^-_{m-1}}
                +\scp{M_0\jump{\xi}_{m-1}^0,\jump{\xi}_{m-1}^0}
              \right],    
  \end{multline*}
  where $\xi^-_m\coloneqq \xi(t_{m}^-)$ and $\xi^-_0\coloneqq 0$. To arrive at the full 
  discrete norm $\norm{\xi}{\rho}$ in the lower bound, we need a weighted summation over $m$. This gives
  \begin{align}\label{eq:erroreq:1}
    \sum_{m=1}^M \e^{-2\rho t_{m-1}}E_\textnormal{d}^m   &\geq \sum_{m=1}^M \e^{-2\rho t_{m-1}}(\gamma \|\xi\|_{\rho,m}^2+\Qmr{T_K\xi,\xi})+\\
    &\quad +\frac{1}{2} \sum_{m=1}^M \left[
                 \scp{M_0\xi^-_m,\xi^-_m}\e^{-2\rho t_m}
                -\scp{M_0\xi^-_{m-1},\xi^-_{m-1}}\e^{-2\rho t_{m-1}}
                \right]\\ \nonumber
      &= \gamma\norm{\xi}{\rho}^2 +\frac{1}{2}\scp{M_0\xi_M^-,\xi_M^-}+ \sum_{m=1}^M \e^{-2\rho t_{m-1}}\Qmr{T_K\xi,\xi}\\
      &\geq \gamma\norm{\xi}{\rho}^2 + \sum_{m=1}^M \e^{-2\rho t_{m-1}}\Qmr{T_K\xi,\xi},
  \end{align}
  where we neglected positive contributions from the jumps  and at $t_M=T$ in the first and last estimate,
  and used $\norm{\xi}{Q,\rho}=\norm{\xi}{\rho}$ for $\xi\in\U{V_k}$.
  
  We are left with estimating the integral term and showing it is basically a perturbation. Recall the norm
  \[
    \norm{K}{1,\rho,\mathrm{unif}}=\max\left\{\esssup_{t\in[0,T]}\int_0^t |K(t,s)|\e^{-\rho(t-s)}\ds,\,\esssup_{s\in[0,T]}\int_s^T |K(t,s)|\e^{-\rho(t-s)}\dt\right\},
  \]
  that was used in establishing well-posedness in Section \ref{sec:wellposedness}. In our numerical method we have replaced the
  scalar products by a quadrature rule. If we now want to estimate the integral term, 
  we also need to apply a quadrature rule in the definition of the above norm. To be more precise, we need
  this only for the second term in the norm and we show the derivation of its discrete counterpart.
  We start by rewriting it:
  \begin{align*}
    \int_s^T |K(t,s)|\e^{-\rho(t-s)}\dt &= \sum_{m=1}^M \int_{t_{m-1}}^{t_m} \id{[s,t_m]}(t)|K(t,s)|\e^{-\rho(t-s)}\dt\\
           &= \sum_{m=1}^M \e^{-2\rho t_{m-1}}\int_{t_{m-1}}^{t_m} \id{[s,t_m]}(t)|K(t,s)|\e^{\rho(t+s)}\e^{-2\rho(t-t_{m-1})}\dt.
  \end{align*}
  Now the sum has the structure needed for the approximation by the quadrature $\Qmr{\cdot}$. We obtain our new semi-discrete norm
  \begin{align*}
    \norm{K}{Q,1,\rho,\mathrm{unif}}
      :=\max\bigg\{&\esssup_{t\in[0,T]}\int_0^t |K(t,s)|\e^{-\rho(t-s)}\ds,\\
                   &\esssup_{s\in[0,T]}\sum_{m=1}^M \e^{-2\rho t_{m-1}}\Qmr{\id{[s,t_m]}|K(\cdot,s)|\e^{\rho(\cdot+s)}}\bigg\}
  \end{align*}
  and we can estimate the operator in this semi-discrete norm.
  \begin{lem}\label{lem:JQbound}
    Let $V\in L^2_\rho([0,T];H)$. We have
    \[
      \norm{T_KV}{Q,\rho} \leq \norm{K}{Q,1,\rho,\mathrm{unif}}\norm{V}{\rho}.
    \]
  \end{lem}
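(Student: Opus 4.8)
The plan is to transplant the proof of Proposition~\ref{prop:Opbound} into the quadrature setting, carefully tracking the weights $\hat\omega^m_i$ and the exponential factors $\e^{-2\rho t_{m-1}}$ that occur in both $\norm{\cdot}{Q,\rho}$ and $\norm{K}{Q,1,\rho,\mathrm{unif}}$.

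\textbf{Step 1 (pointwise estimate at a quadrature node).} Fix $m$ and a node $\tmi\in(t_{m-1},t_m]$ and look at the single value $(T_K V)(\tmi)=\int_0^{\tmi}K(\tmi,s)V(s)\ds$ (the Bochner integral being finite for the continuous, and in particular discrete, functions to which the lemma will be applied; the general case follows by density). Writing $|K(\tmi,s)|\,|V(s)|_H$ as the product of $|K(\tmi,s)|^{1/2}\e^{-\rho(\tmi-s)/2}$ and $|K(\tmi,s)|^{1/2}\e^{\rho(\tmi-s)/2}|V(s)|_H$ and applying the Cauchy--Schwarz inequality for Bochner integrals gives
\[
  \bigl|(T_K V)(\tmi)\bigr|_H^2
  \le\Bigl(\int_0^{\tmi}|K(\tmi,s)|\e^{-\rho(\tmi-s)}\ds\Bigr)
     \Bigl(\int_0^{\tmi}|K(\tmi,s)|\e^{\rho(\tmi-s)}|V(s)|_H^2\ds\Bigr).
\]
The first factor is the value at $t=\tmi\in[0,T]$ of the function whose essential supremum is the first entry in the maximum defining $\norm{K}{Q,1,\rho,\mathrm{unif}}$, and hence is bounded by $\norm{K}{Q,1,\rho,\mathrm{unif}}$; in the second factor I would rewrite $\e^{\rho(\tmi-s)}=\e^{\rho(\tmi+s)}\e^{-2\rho s}$ to expose a clean weight $|V(s)|_H^2\e^{-2\rho s}$.

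\textbf{Step 2 (assembling the norm and swapping the order).} By definition $\norm{T_K V}{Q,\rho}^2=\sum_{m=1}^M\e^{-2\rho t_{m-1}}\sum_{i=0}^q\hat\omega^m_i\,\bigl|(T_K V)(\tmi)\bigr|_H^2$. Inserting the Step~1 bound, pulling out the constant factor $\norm{K}{Q,1,\rho,\mathrm{unif}}$, and interchanging the two finite sums with the $s$-integral (everything nonnegative, so Tonelli applies) yields
\[
  \norm{T_K V}{Q,\rho}^2
  \le\norm{K}{Q,1,\rho,\mathrm{unif}}\int_0^T|V(s)|_H^2\e^{-2\rho s}
     \Bigl[\,\sum_{m=1}^M\e^{-2\rho t_{m-1}}\sum_{i=0}^q\hat\omega^m_i\,\id{[s,\infty)}(\tmi)\,|K(\tmi,s)|\e^{\rho(\tmi+s)}\Bigr]\ds.
\]
Because every node satisfies $\tmi\le t_m$, one has $\id{[s,\infty)}(\tmi)=\id{[s,t_m]}(\tmi)$, so the bracket is exactly $\sum_{m=1}^M\e^{-2\rho t_{m-1}}\Qmr{\id{[s,t_m]}|K(\cdot,s)|\e^{\rho(\cdot+s)}}$, which for a.e.\ $s\in[0,T]$ is bounded by $\norm{K}{Q,1,\rho,\mathrm{unif}}$ (its second entry). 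Thus $\norm{T_K V}{Q,\rho}^2\le\norm{K}{Q,1,\rho,\mathrm{unif}}^2\,\norm{V}{\rho}^2$, and taking square roots gives the claim.

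\textbf{Main obstacle.} The computation itself is routine once the exponents are matched and the indicator identity is noticed; the one genuinely delicate point is the first-factor bound $\int_0^{\tmi}|K(\tmi,s)|\e^{-\rho(\tmi-s)}\ds\le\norm{K}{Q,1,\rho,\mathrm{unif}}$, which evaluates an \emph{essential} supremum at the particular point $\tmi$. This is harmless when $t\mapsto\int_0^t|K(t,s)|\e^{-\rho(t-s)}\ds$ is continuous --- the typical situation for the kernels considered here --- and I would state this (or the corresponding harmless modification of the argument) explicitly rather than leave it implicit.
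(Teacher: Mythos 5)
Your proposal is correct and follows essentially the same route as the paper's proof: the same Cauchy--Schwarz splitting of $|K(\tmi,s)|$ into $|K(\tmi,s)|^{1/2}\e^{\mp\rho(\tmi-s)/2}$ factors at each quadrature node, the same rewriting $\e^{\rho(\tmi-s)}=\e^{\rho(\tmi+s)}\e^{-2\rho s}$, the same indicator swap $\id{[0,\tmi]}(s)=\id{[s,t_m]}(\tmi)$, and the same use of the two entries of $\norm{K}{Q,1,\rho,\mathrm{unif}}$ in succession. Your closing caveat about evaluating the essential supremum at the individual nodes $\tmi$ is a fair point that the paper leaves implicit, but it does not alter the argument.
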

  \begin{proof}
    By the definition 
    of the quadrature rule,
    and using ideas from the analysis in the proof of Proposition \ref{prop:Opbound} we have
    \begin{align*}
        \Qmr{T_KV,T_KV}
        &= \sum_{i=0}^q {\hat\omega}_i^m\left| \int_0^\tmi K(\tmi,s)V(s)\ds \right|^2\\
        &= \sum_{i=0}^q {\hat\omega}_i^m\left| \int_0^\tmi K(\tmi,s)\e^{-\frac{\rho}{2}(\tmi-s)}V(s)\e^{\frac{\rho}{2} (\tmi-s)}\ds \right|^2\\
        &\leq \sum_{i=0}^q {\hat\omega}_i^m \int_0^\tmi |K(\tmi,s)|\e^{-\rho(\tmi-s)}\ds
                                                      \int_0^\tmi |K(\tmi,s)|\e^{\rho(\tmi-s)}|V(s)|^2\ds\\
        &\leq \norm{K}{Q,1,\rho,\mathrm{unif}}\sum_{i=0}^q {\hat\omega}_i^m \int_0^\tmi |K(\tmi,s)|\e^{\rho(\tmi+s)}|V(s)|^2\e^{-2\rho s}\ds \\
        &= \norm{K}{Q,1,\rho,\mathrm{unif}}\int_0^T \sum_{i=0}^q {\hat\omega}_i^m \id{[0,\tmi]}(s) |K(\tmi,s)|\e^{\rho(\tmi+s)}|V(s)|^2\e^{-2\rho s}\ds \\
        &= \norm{K}{Q,1,\rho,\mathrm{unif}}\int_0^T \sum_{i=0}^q {\hat\omega}_i^m \id{[s,t_m]}(\tmi) |K(\tmi,s)|\e^{\rho(\tmi+s)}|V(s)|^2\e^{-2\rho s}\ds\\
        &= \norm{K}{Q,1,\rho,\mathrm{unif}}\int_0^T \Qmr{\id{[s,t_m]} |K(\cdot,s)|\e^{\rho(\cdot+s)}}|V(s)|^2\e^{-2\rho s}\ds.
    \end{align*}
    Applying the weighted sum we obtain
    \begin{align*}
      \norm{T_KV}{Q,\rho}^2 &= \sum_{m=1}^M \e^{-2\rho t_{m-1}}\Qmr{T_KV,T_KV}\\
        &\leq \norm{K}{Q,1,\rho,\mathrm{unif}}\int_0^T \sum_{m=1}^M \e^{-2\rho t_{m-1}}\Qmr{\id{[s,t_m]} |K(\cdot,s)|\e^{\rho(\cdot+s)}}|V(s)|^2\e^{-2\rho s}\ds\\
        &\leq \norm{K}{Q,1,\rho,\mathrm{unif}}^2\int_0^T |V(s)|^2\e^{-2\rho s}\ds\\
        &= \norm{K}{Q,1,\rho,\mathrm{unif}}^2\norm{V}{\rho}^2.\qedhere
    \end{align*}
  \end{proof}  
  
  Now we are ready to estimate the remaining term in \eqref{eq:erroreq:1}. 
  \begin{lem}\label{lem:Jxi_new}
    Let $V\in\U{V_k}$. We have
    \[
      \left|\sum_{m=1}^M \e^{-2\rho t_{m-1}}\Qmr{T_KV,V}\right| \leq \norm{K}{Q,1,\rho,\mathrm{unif}}\norm{V}{\rho}^2.
    \]
  \end{lem}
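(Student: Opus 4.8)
The plan is to read the left-hand side as a Cauchy--Schwarz pairing for the discrete inner product hidden in the quadrature, and then to feed the two resulting factors into Lemma~\ref{lem:JQbound} and the exactness of the quadrature on $\U{V_k}$.

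First I would record that, writing $\Qmr{a,b}=\sum_{i=0}^q {\hat\omega}_i^m\scp{a(\tmi),b(\tmi)}$, the weighted Gau\ss--Radau weights ${\hat\omega}_i^m=\tfrac{\tau_m}{2}{\omega}_i^m$ are strictly positive; this is the standard fact that a Gaussian-type quadrature with a positive weight function (here $\e^{-2\rho\tau_m(t+1)}$) has positive weights. Consequently
\[
  \langle a,b\rangle_{Q,\rho}:=\sum_{m=1}^M\e^{-2\rho t_{m-1}}\Qmr{a,b}
\]
is a symmetric, positive semidefinite bilinear form on functions defined at all nodes $\tmi$, and $\langle v,v\rangle_{Q,\rho}=\norm{v}{Q,\rho}^2$. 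Hence the Cauchy--Schwarz inequality applies and gives
\[
  \left|\sum_{m=1}^M\e^{-2\rho t_{m-1}}\Qmr{T_KV,V}\right|
    =\left|\langle T_KV,V\rangle_{Q,\rho}\right|
    \leq \norm{T_KV}{Q,\rho}\,\norm{V}{Q,\rho}.
\]

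It then remains to bound the two factors. For the first, Lemma~\ref{lem:JQbound} yields $\norm{T_KV}{Q,\rho}\leq\norm{K}{Q,1,\rho,\mathrm{unif}}\norm{V}{\rho}$. For the second, since $V\in\U{V_k}\subseteq\U{H}$, on each interval $I_m$ the scalar map $t\mapsto\scp{V(t),V(t)}$ is a polynomial of degree at most $2q$, so the Gau\ss--Radau rule integrates it exactly; thus $\Qmr{V,V}=\norm{V}{\rho,m}^2$ for every $m$ and therefore $\norm{V}{Q,\rho}=\norm{V}{\rho}$. Substituting both bounds into the Cauchy--Schwarz estimate gives $\bigl|\sum_{m=1}^M\e^{-2\rho t_{m-1}}\Qmr{T_KV,V}\bigr|\leq\norm{K}{Q,1,\rho,\mathrm{unif}}\norm{V}{\rho}^2$, which is the assertion.

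There is no genuine obstacle here: the only points needing care are the positivity of the quadrature weights (which legitimises Cauchy--Schwarz for $\langle\cdot,\cdot\rangle_{Q,\rho}$) and the exactness $\norm{V}{Q,\rho}=\norm{V}{\rho}$ for discrete $V$, both standard and already used earlier in the paper. One could equally avoid naming $\langle\cdot,\cdot\rangle_{Q,\rho}$ and instead apply the scalar Cauchy--Schwarz inequality twice — once on $H$ inside each term $\scp{T_KV(\tmi),V(\tmi)}$ and once on the doubly-indexed sum over $(m,i)$ with the nonnegative weights $\e^{-2\rho t_{m-1}}{\hat\omega}_i^m$ — arriving at the same conclusion.
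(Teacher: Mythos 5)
Your proof is correct and takes essentially the same route as the paper: Cauchy--Schwarz for the positive-weight quadrature form, the exactness $\norm{V}{Q,\rho}=\norm{V}{\rho}$ for $V\in\U{V_k}$, and Lemma~\ref{lem:JQbound} to bound $\norm{T_KV}{Q,\rho}$. The only cosmetic difference is that the paper applies Cauchy--Schwarz in two stages (first per interval for $\Qmr{\cdot,\cdot}$, then over the weighted sum in $m$) instead of once on the global bilinear form, which yields the identical bound.
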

  \begin{proof}
    As a first step we apply the Cauchy--Schwarz inequality
    \begin{align*}
      |\Qmr{T_KV,V}|^2
        &\leq \Qmr{T_KV,T_KV}\Qmr{V,V}
         = \Qmr{T_KV,T_KV}\norm{V}{Q,\rho,m}^2\\
        &= \Qmr{T_KV,T_KV}\norm{V}{\rho,m}^2
    \end{align*}
    and the definition of the norms for $V\in\U{V_k}$.
    Now a further Cauchy--Schwarz inequality gives
    \begin{align}
      \left|\sum_{m=1}^M \e^{-2\rho t_{m-1}}\Qmr{T_KV,V}\right|^2
        &\leq \left(\sum_{m=1}^M \e^{-2\rho t_{m-1}}\Qmr{T_KV,T_KV}\right)\norm{V}{\rho}^2
         =  \norm{T_KV}{Q,\rho}^2\norm{V}{\rho}^2.
    \end{align}
    The  desired estimate follows by the previous Lemma~\ref{lem:JQbound}.
  \end{proof}  
  
  Combining \eqref{eq:erroreq:1} and Lemma~\ref{lem:Jxi_new} we have the coercivity result
  \begin{gather}\label{eq:erroreq:2}
    \sum_{m=1}^M \e^{-2\rho t_{m-1}}E_\textnormal{d}^m
      \geq \frac{\gamma}{2}\norm{\xi}{\rho}^2
  \end{gather}
  if $\norm{K}{Q,1,\rho,\mathrm{unif}}\leq\frac{\gamma}{2}$.
  
  \subsection{Convergence analysis}
  Before estimating the right-hand side of \eqref{eq:error}, we introduce 
  a shorthand notation for the discretised full norm in space and time
  \[
    \norm{u}{Q,\rho,k,D}^2
    = \sum_{m=1}^M\Qm{|u|^2_{k,D}}\e^{-2\rho t_{m-1}}
  \]
  where $D\subseteq \Omega$ is measurable.

  We rewrite the derivative term in \eqref{eq:error} as shown in \cite[Lemma 3.7]{FrTW16} as
  \[
    \Qmr{\pt_t M_0\eta,\xi}+\scp{M_0\jump{\eta}_{m-1},\xi_{m-1}^+}
      = \Qmr{\pt_t M_0(U-\widehat P IU),\xi}+R(U,\xi),
  \]
  where the temporal Lagrange interpolation operator $\widehat P$ utilises $t_{m-1}$ 
  in addition to $\tmi$, $i\in\{0,\dots,q\}$ as interpolation point, 
  see \cite{FrTW16} for the precise definition. The remainder $R(U,\xi)$ has the bound
  \[
    |R(U,\xi)|
      \leq C\alpha\tau_m|M_0\eta_{m-1}^+|_H^2
            +\beta\norm{\xi}{\rho,m}^2,
  \]
  where here and throughout $\alpha,\beta>0$ such that $\alpha\beta=\frac{1}{4}$,
  and the precise value of $\beta$ is given below. Now an application of the Cauchy--Schwarz inequality
  followed by the Young inequality to all the quadrature terms in $E_\textnormal{i}^m$ yields
  \begin{align*}
    E_\textnormal{i}^m 
      & \leq \norm{\pt_t M_0(U-\widehat P IU)}{Q,\rho,m}\norm{\xi}{Q,\rho,m}+|R(U,\xi)|
             +\norm{M_1\eta}{Q,\rho,m}\norm{\xi}{Q,\rho,m}\\&\qquad
             +\norm{A\eta}{Q,\rho,m}\norm{\xi}{Q,\rho,m}
             +\norm{T_K\eta}{Q,\rho,m}\norm{\xi}{Q,\rho,m}\\
      &\leq \alpha\left(\norm{\pt_t M_0(U-\widehat P IU)}{Q,\rho,m}^2
                         +C\tau_m|M_0\eta_{m-1}^+|_H^2
                         +\norm{M_1\eta}{Q,\rho,m}^2
                         +\norm{A\eta}{Q,\rho,m}^2
                         +\norm{T_K\eta}{Q,\rho,m}^2\right)\\&\qquad+5\beta\norm{\xi}{\rho,m}.
  \end{align*}
  In view of ~\eqref{eq:erroreq:2}, we set $\beta=\frac{\gamma}{20}$ and therefore $\alpha=\frac{5}{\gamma}$. 
  Now summation, the error equation~\eqref{eq:error} and 
  inequality \eqref{eq:erroreq:2} give the relation between discrete error and interpolation errors
  \begin{multline}
    \frac{\gamma}{4}\norm{\xi}{\rho}^2
       \leq C\bigg(\norm{\pt_t M_0(U-\widehat P IU)}{Q,\rho}^2
                         +T\max_{1\leq m\leq M}\{|M_0\eta_{m-1}^+|_H^2\e^{-2\rho t_{m-1}}\}\\
                         +\norm{M_1\eta}{Q,\rho}^2
                         +\norm{A\eta}{Q,\rho}^2
                         +\norm{T_K\eta}{Q,\rho}^2  \bigg).\label{eq:estimate}
  \end{multline}
  \begin{rem}
    Like in \cite{FrTW16} we directly have control over the end-point error $|\xi(T)|$ by the above right-hand side
    using \eqref{eq:erroreq:1} with the final time contribution included. The control of the supremum in time,
    like in \cite{FrTW16}, is not considered here, but seems plausible and is observed numerically.
  \end{rem}
  The first four terms on the right-hand side of \eqref{eq:estimate} were estimated in \cite{FrTW16}. 
  We recall the results here, assuming sufficient regularity of $U$ and setting $\tau=\max\limits_{1\leq m\leq M}\tau_m$.
  \begin{itemize}
    \item A generalisation of \cite[Lemma 4.2]{FrTW16} (given for the special case of 
          $A=\spmtrx{0&\Div\\\Grad_0&0}$ where $\Grad_0$ is the gradient for $H_0^1(\Omega)$-functions 
          and $\Div$ is its negative adjoint operator), using \eqref{eq:interspace}, provides
          \[
            \norm{A\eta}{Q,\rho} 
              \leq C h^k \norm{AU}{Q,\rho,k,\Omega}.
          \]
    \item \cite[Lemma 4.3]{FrTW16} gives
          \[
            \norm{M_1\eta}{Q,\rho} 
              \leq C h^k \norm{M_1U}{Q,\rho,k,\Omega}.
          \]
    \item \cite[Lemma 4.5]{FrTW16} gives
          \[
            \norm{\pt_t M_0(U-\widehat P IU)}{Q,\rho} 
              \leq C \left( h^k \norm{M_0\pt U}{Q,\rho,k,\Omega}+\tau^{q+1}\sup_{t\in[0,T]}|M_0\pt_t^{q+2}IU|_H\right).
          \]
    \item \cite[Lemma 4.6]{FrTW16} gives
          \[
            \max_{1\leq m\leq M}\{|M_0\eta_{m-1}^+|_H\e^{-\rho t_{m-1}}\}
              \leq C \tau^{q+1}\sup_{t\in[0,T]}|M_0\pt_t^{q+1}IU|_H.
          \]
    \item In addition \cite[Lemma 4.2]{FrTW16} also gives
          \[
            \norm{\eta}{Q,\rho} 
              \leq C h^k \norm{U}{Q,\rho,k,\Omega}.
          \]
  \end{itemize}
  The only term left to estimate is the integral term. Here we can use Lemma~\ref{lem:JQbound} for $V=\eta$ and 
  obtain
  \[
    \norm{T_K\eta}{Q,\rho} \leq \norm{K}{Q,1,\rho,\mathrm{unif}}\norm{\eta}{\rho}.
  \]
  This inequality uses the non-discretised norm, that can be
  estimated by
  \[
    \norm{\eta}{\rho}
      \leq \norm{U-PU}{\rho}+\norm{PU-PIU}{\rho}
      = \norm{U-PU}{\rho}+\norm{U-IU}{Q,\rho}
      = \norm{U-PU}{\rho}+\norm{\eta}{Q,\rho}.
  \]
  The second term is estimated above, while for the first we use \cite[Theorem 2.5]{Fr23}:
  \[
    \norm{U-PU}{\rho}\leq C\tau^{q+1}\norm{\pt_t^{q+1}U}{L^2_{\rho}(0,T;H)}.
  \]
  Combining above estimates, we obtain the main result of this paper.
  \begin{thm}\label{thm:main}
    We assume for the solution $U$ of \eqref{eq:problem} the 
    regularity     
    \[
      U\in H_\rho^{1}(\R;H^k(\Omega))\cap 
           H_\rho^{q+3}(\R;L^2(\Omega)) 
    \]
    as well as 
    \[
      AU\in H_\rho(\R; H^k(\Omega)).
    \]
    If $K$ satisfies
    \[
      \norm{K}{Q,1,\rho,\mathrm{unif}}\leq \frac{\gamma}{2},
    \]
    then we have for the discrete error of the numerical solution given by 
    \eqref{eq:discr_quad_form_final}
    \begin{align*}
      \norm{\xi}{\rho}
        \leq C(h^k + T^{1/2}\tau^{q+1})
    \end{align*}
    and therefore for the error itself
    \begin{align*}
      \norm{U-U^{\tau,h}}{\rho}
        \leq C(h^k + T^{1/2}\tau^{q+1}).
    \end{align*}
  \end{thm}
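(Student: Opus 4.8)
The plan is to assemble the ingredients already prepared in the preceding pages; no fundamentally new estimate is required. The backbone is inequality \eqref{eq:estimate}, which---through the coercivity bound \eqref{eq:erroreq:2}, legitimate precisely because of the hypothesis $\norm{K}{Q,1,\rho,\mathrm{unif}}\leq\frac{\gamma}{2}$---controls $\frac{\gamma}{4}\norm{\xi}{\rho}^2$ by a constant times a sum of five interpolation-type quantities. So the first step is to record that this reduces the theorem to showing each of those five quantities is $\ord{(h^k+T^{1/2}\tau^{q+1})^2}$ under the stated regularity of $U$, after which the bound on $\xi$ is transferred to $U-U^{\tau,h}$ via the triangle inequality $\norm{U-U^{\tau,h}}{\rho}\leq\norm{\xi}{\rho}+\norm{\eta}{\rho}$ recorded at the start of the section.

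For the first four of the five terms I would simply quote the estimates imported from \cite{FrTW16} and recalled above: \cite[Lemma~4.2]{FrTW16} for $\norm{A\eta}{Q,\rho}\leq C h^k\norm{AU}{Q,\rho,k,\Omega}$, \cite[Lemma~4.3]{FrTW16} for $\norm{M_1\eta}{Q,\rho}\leq C h^k\norm{M_1U}{Q,\rho,k,\Omega}$, \cite[Lemma~4.5]{FrTW16} for $\norm{\pt_t M_0(U-\widehat P IU)}{Q,\rho}\leq C(h^k+\tau^{q+1})$, and \cite[Lemma~4.6]{FrTW16} for the endpoint term $\max_m|M_0\eta_{m-1}^+|_H\e^{-\rho t_{m-1}}\leq C\tau^{q+1}$; the last enters \eqref{eq:estimate} multiplied by $T$, and taking square roots there turns $T\tau^{2(q+1)}$ into the $T^{1/2}\tau^{q+1}$ appearing in the statement. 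The point that needs checking is only that the hypotheses $U\in H_\rho^{1}(\R;H^k(\Omega))\cap H_\rho^{q+3}(\R;L^2(\Omega))$ and $AU\in H_\rho(\R;H^k(\Omega))$ supply exactly what those lemmas need: the spatial $H^k$-regularity feeds the interpolation bound \eqref{eq:interspace}, while the temporal $H^{q+3}$-regularity---untouched by the spatial interpolation operator $I$---yields, by one-dimensional Sobolev embedding in the time variable, the continuity in time of $\pt_t^{q+2}IU$ and $\pt_t^{q+1}IU$ needed for the $\tau^{q+1}$ contributions.

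The fifth term, $\norm{T_K\eta}{Q,\rho}$, is the one genuinely specific to the present paper. Here the plan is to apply Lemma~\ref{lem:JQbound} with $V=\eta$ to get $\norm{T_K\eta}{Q,\rho}\leq\norm{K}{Q,1,\rho,\mathrm{unif}}\norm{\eta}{\rho}\leq\frac{\gamma}{2}\norm{\eta}{\rho}$, then split $\norm{\eta}{\rho}\leq\norm{U-PU}{\rho}+\norm{\eta}{Q,\rho}$, bounding the first piece by $C\tau^{q+1}\norm{\pt_t^{q+1}U}{L^2_\rho(0,T;H)}$ via \cite[Theorem~2.5]{Fr23} and the second by $C h^k\norm{U}{Q,\rho,k,\Omega}$ via \cite[Lemma~4.2]{FrTW16}. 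This gives $\norm{T_K\eta}{Q,\rho}\leq C(h^k+\tau^{q+1})$, and incidentally also $\norm{\eta}{\rho}\leq C(h^k+\tau^{q+1})$, which is what is needed for the final triangle inequality. Feeding all five bounds into \eqref{eq:estimate} yields $\norm{\xi}{\rho}\leq C(h^k+T^{1/2}\tau^{q+1})$, and then $\norm{U-U^{\tau,h}}{\rho}\leq\norm{\xi}{\rho}+\norm{\eta}{\rho}\leq C(h^k+T^{1/2}\tau^{q+1})$.

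I do not anticipate a real obstacle: the analytically substantive work sits in Lemma~\ref{lem:JQbound} and in the imported lemmas of \cite{FrTW16}, all of which are available. The only point demanding genuine care is the bookkeeping of regularity---checking that $H_\rho^{q+3}$ in time is precisely the assumption that simultaneously makes $\sup_{t\in[0,T]}|M_0\pt_t^{q+2}IU|_H$ finite (through \cite[Lemma~4.5]{FrTW16}) and $\norm{\pt_t^{q+1}U}{L^2_\rho(0,T;H)}$ finite (through \cite{Fr23})---and tracking the factor $T$ on the endpoint term through the square root so that the stated $T^{1/2}\tau^{q+1}$ is recovered.
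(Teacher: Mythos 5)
Your proposal is correct and follows essentially the same route as the paper: the theorem is obtained exactly by combining the coercivity bound \eqref{eq:erroreq:2} with the estimate \eqref{eq:estimate}, quoting the four lemmas imported from \cite{FrTW16} for the standard terms, and treating the integral term via Lemma~\ref{lem:JQbound} together with the splitting $\norm{\eta}{\rho}\leq\norm{U-PU}{\rho}+\norm{\eta}{Q,\rho}$ and \cite[Theorem~2.5]{Fr23}. Your bookkeeping of the regularity hypotheses and of the factor $T^{1/2}$ matches the paper's argument.
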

  
  \subsection{On the implementation}
  In this section we will share some of the implementation details used when programming 
  the numerical method. As a basis, we used the finite-element framework $\mathbb{SOFE}$ in Matlab.
  
  For a numerical simulation all integrals must be approximated. Here, the scalar products
  with test functions in space and time are discretised using a weighted, right-sided Gauß--Radau 
  quadrature rule in time with $q+1$ evaluations, where $q$ is the polynomial degree in time. 
  The quadrature weights and nodes are non-standard due to the exponentially 
  weighted $L^2$-space, but can always be computed numerically as shown in \cite[Chapter 4.6]{PTVF07}.
  An alternative is the reformulation given in Remark~\ref{rem:Reformulation}, 
  where only standard quadrature rules are needed. Their weights and nodes are tabulated
  for small degrees $q$ or can be computed as referred to above.

  A Gauß--Legendre quadrature rule with $k+1$ evaluations is used to approximate the integration in space, where $k$ is the polynomial degree in space.
  The third type of integral can be found in the integral operator $T_K$. We implemented two possibilities for its evaluation.
  For smooth kernels, we use a Gauß--Legendre quadrature rule with a fixed number of $q+1$ evaluations 
  to evaluate the integral with high precision.
  However, for kernels with a (weak) singularity this fixed order is insufficient to produce satisfactory results.
  In this case, we apply the Matlab routine \texttt{integral}, which employs an adaptive quadrature rule and 
  automatically refines the integration interval near singularities. The downside to this more precise approximation
  is higher computational costs. This is particularly evident in the computation of the history terms.
  
  However, evaluating the history terms is always the most time-consuming part of the computational step. 
  The number of evaluations grows quadratically in the number of total time steps $M$. Alternatives to this direct 
  approach exist, for instance in the numerical simulation of Caputo fractional derivatives, 
  where, in addition to the integral operator, a derivative of the unknown function is also included. 
  These fast evaluation methods (see e.g.~\cite{JZZZ17}) transform power-functions
  into exponential series and use specialised quadrature rules to reuse previously computed data in a recursion.
  This reduces the computational costs from $\ord{M^2}$ to $\ord{M\log(M)}$.  
  It would be interesting to apply these methods to our integral operators.
  
  The Gauß--Radau quadrature rule used in time is exact for polynomials of degree $2q$.
  In our weak formulation (see \eqref{eq:discr_quad_form_final}) the terms involving the matrices $M_0$ and $M_1$  
  only include polynomials of degree $2q-1$ and $2q$, resp., if we assume that $M_0$ and $M_1$ are constant. 
  Thus, these scalar products are evaluated exactly. However, the integral operator term is different.
  Here, the polynomial basis is integrated first (assuming a constant kernel $K$ for ease of presentation), thus increasing
  the polynomial degree by one. When multiplied by a test function, the integrand is of degree $2q+1$, 
  for which the quadrature rule is no longer exact. Interestingly, using a better quadrature rule
  for this integral gives essentially the same results -- the difference is below the approximation quality. 

  \section{Numerical examples}\label{sec:examples}
  Our theoretical results suggest convergence of order $q+1$ in time and $k$ in space. To 
  match these, we set $q=k-1$. For the meshes in time and space, we chose equidistant meshes with $M$ cells in time
  and $N$ cells in each spatial dimension. This gives a time step of $\tau=\frac{T}{M}$ and a mesh width in space of
  $h=\frac{1}{N}$ for $\Omega=(0,1)^d$.
  
  We begin our investigation of the numerical behaviour with a case where the exact solution is known in advance.
  We compute the right-hand sides and solve the resulting system numerically. 
  The advantage of a given solution is that errors can be computed exactly.
  In the following, we denote by $\partial_x$ the weak derivative operator on $L^2(0,1)$ with maximal domain
  $H^1(0,1)$, while $\partial_x^\circ$ denotes the restriction of $\partial_x$ to the domain $H_0^1(0,1)$. 
  Note that then $\partial_x^\ast=-\partial_x^\circ$.
  
  \textbf{Example 1:} Let us consider the 1d problem
  \[
    \left(\pt_t \pmtrx{1&0\\0&1}+\pmtrx{1&0\\0&1}+\pmtrx{[c]0&\pt_x\\\pt_x^\circ&0}+T_K\right)\pmtrx{u\\v}=F
  \]
  in $[0,T]\times\Omega=[0,2]\times(0,1)$,
  where the integral operator is given by 
  \[
    T_K\pmtrx{u\\v}(t,x) = \int_0^tK(t,s)\pmtrx{u\\v}(s,x)\ds
  \]
  with the smooth and bounded kernel
  \[
    K(t,s) = \pmtrx{[c]t-s & s\\t & (t-s)^2}
  \]
  and the right hand side $F$, such that
  \begin{align*}
    u(t,x) &= (t+\e^{-t})\sin(\pi x^2),\\
    v(t,x) &= \cos(t)\exp(x)
  \end{align*}
  is the exact solution. 
  For this example, we have $\gamma=1$ and
  \[
    \norm{K}{\rho,1,\mathrm{unif}}\leq \frac{T}{\rho} = \frac{2}{\rho} \stackrel{!}{\leq}\frac{\gamma}{2}=\frac{1}{2}
    \rarrow \rho\geq 4.
  \]
  For the semidiscrete norm the same bound holds assuming the mesh being fine enough. While the integrals
  in the above norm can be evaluated analytically, the quadrature sum is more difficult. But we can compute
  the norm numerically and look into the behaviour w.r.t.~ $\rho$. Doing so, we observe the same bound.

  From a practical point of view the precise value of $\rho$ does not matter and we 
  observe numerically that any value of $\rho\geq0$ produces very similar results. 
  Thus, we chose $\rho=1$.
  
  As the given kernel is smooth, we use a Gauß--Legendre quadrature rule with $q+1$ points
  in order to evaluate the history terms faster. Furthermore, in one spatial dimension we can use 
  $\PS_{k}^{\textrm{cont}}(\Omega_h)$-elements.
  Table~\ref{tab:ex1}
  \begin{table}[tb]
    \caption{Convergence results for $U-U_h$ of Example 1\label{tab:ex1}}
    \begin{center}
      {\begin{tabular}{crllll}
        \toprule
        $(k,q)$ & $N=M$ & 
        \multicolumn{2}{c}{$E_{\sup}(U-U_h)$} &
        \multicolumn{2}{c}{$\norm{U-U_h}{\rho}$}\\
        \midrule
        $(1,0)$
        &    8 & 3.848e-01 &      & 9.477e-02 &     \\
        &   16 & 1.940e-01 & 0.99 & 4.698e-02 & 1.01\\
        &   32 & 9.776e-02 & 0.99 & 2.351e-02 & 1.00\\
        &   64 & 4.912e-02 & 0.99 & 1.177e-02 & 1.00\\
        &  128 & 2.463e-02 & 1.00 & 5.894e-03 & 1.00\\
        \midrule
        $(2,1)$
        &    8 & 1.635e-02 &      & 5.182e-03 &     \\
        &   16 & 4.490e-03 & 1.86 & 1.311e-03 & 1.98\\
        &   32 & 1.175e-03 & 1.93 & 3.310e-04 & 1.99\\
        &   64 & 3.006e-04 & 1.97 & 8.335e-05 & 1.99\\
        &  128 & 7.603e-05 & 1.98 & 2.092e-05 & 1.99\\
        \midrule
        $(3,2)$
        &    8 & 9.613e-04 &      & 2.588e-04 &     \\
        &   16 & 7.126e-05 & 3.75 & 1.557e-05 & 4.05\\
        &   32 & 7.520e-06 & 3.24 & 1.189e-06 & 3.71\\
        &   64 & 9.082e-07 & 3.05 & 1.160e-07 & 3.36\\
        &  128 & 1.132e-07 & 3.00 & 1.331e-08 & 3.12\\
        \bottomrule
      \end{tabular}}
    \end{center}
  \end{table}
  shows the convergence results for polynomial degrees $q\in\{0,1,2\}$ and $k=q+1$.
  Here
  \[
      E_{\sup}(v)^2 \coloneqq \sup_{t\in[0,T]}\scp{M_0v(t),v(t)}
  \]
  denotes the supremum norm that we did not estimate here, but in \cite{FrTW16}.
  Furthermore, the norm $\norm{U-U_h}\rho$ 
  is estimated using a refined quadrature rule in the final columns.
  Note that the expected convergence rates of $\min\{k,q+1\}$ are achieved.

  \textbf{Example 2:} We use the same data as in Example 1, except for the kernel, where we take
  \[
    K(t,s) = \pmtrx{[c](t-s)^{-3/4} & 0\\0 & (t-s)^{-1/2}}.
  \]
  This kernel is not bounded, but we have
  \[
    \norm{K}{Q,\rho,1,\mathrm{unif}}\leq C \rho^{-1/4},
  \]
  so for large enough $\rho$ the numerical convergence theory does
  still apply. Practically, $\rho=1$ suffices and
  we obtain the results given in Table~\ref{tab:ex2_new}.
  \begin{table}[tb]
    \caption{Convergence results for $U-U_h$ of Example 2\label{tab:ex2_new}}
    \begin{center}
      {\begin{tabular}{crllll}
        \toprule
        $(k,q)$ & $N=M$ & 
        \multicolumn{2}{c}{$E_{\sup}(U-U_h)$} &
        \multicolumn{2}{c}{$\norm{U-U_h}{\rho}$}\\
        \midrule
        $(1,0)$
          &   8 & 3.242e-01 &      & 7.924e-02 &     \\
          &  16 & 1.776e-01 & 0.87 & 3.933e-02 & 1.01\\
          &  32 & 9.395e-02 & 0.92 & 1.974e-02 & 0.99\\
          &  64 & 4.859e-02 & 0.95 & 9.908e-03 & 0.99\\
          & 128 & 2.479e-02 & 0.97 & 4.964e-03 & 1.00\\
        \midrule
        $(2,1)$
          &   8 & 1.612e-02 &      & 4.577e-03 &     \\
          &  16 & 4.464e-03 & 1.85 & 1.172e-03 & 1.97\\
          &  32 & 1.173e-03 & 1.93 & 2.970e-04 & 1.98\\
          &  64 & 3.004e-04 & 1.97 & 7.484e-05 & 1.99\\
          & 128 & 7.600e-05 & 1.98 & 1.878e-05 & 1.99\\
        \midrule
        $(3,2)$
          &   8 & 6.029e-04 &      & 1.840e-04 &     \\
          &  16 & 5.936e-05 & 3.34 & 1.182e-05 & 3.96\\
          &  32 & 7.169e-06 & 3.05 & 1.017e-06 & 3.54\\
          &  64 & 8.982e-07 & 3.00 & 1.095e-07 & 3.22\\
          & 128 & 1.129e-07 & 2.99 & 1.309e-08 & 3.06\\ 
         \bottomrule
      \end{tabular}}
    \end{center}
  \end{table}
  Although the kernel is weakly singular, wo observe the theoretical convergence rates of $\min\{k,q+1\}$.
  Due to the weak singularity of the kernel we do not employ a Gauß--Legendre quadrature rule for 
  the history terms. We would need a very high order method to give equally precise results compared to 
  the adaptive quadrature routine of Matlab.

  \textbf{Example 3:} For our third and final example we prescribe a given right-hand side and an initial 
  condition. We do not know the exact solution and use a 
  reference solution for the computation of errors. 
  This is an approximation, computed on a fine mesh with 
  twice the number of cells in time and space compared to the finest mesh used in the 
  computation run, with polynomial degrees $q_{ref}=q+1$ and $k_{ref}=k+1$.
  Provided the true solution is sufficiently smooth, we can expect the reference solution 
  to be a useful substitute.
  
  The problem is the same as in Example 2, but with the right-hand side
  $F$ given as $F=1$ and the initial condition
  \[
    u(0,x)=\sin(2\pi x^2),\,v(0,x)=0. 
  \]
  Thus, we use the same kernel
  \[
    K(t,s) = \pmtrx{[c](t-s)^{-3/4} & 0\\0 & (t-s)^{-1/2}},
  \]
  but the exact solution is unknown. An approximation of the solution to this problem is shown in Figure~\ref{fig:ex3_sol}.
  \begin{figure}[tb]
    \begin{center}
       \includegraphics[width=0.4\textwidth]{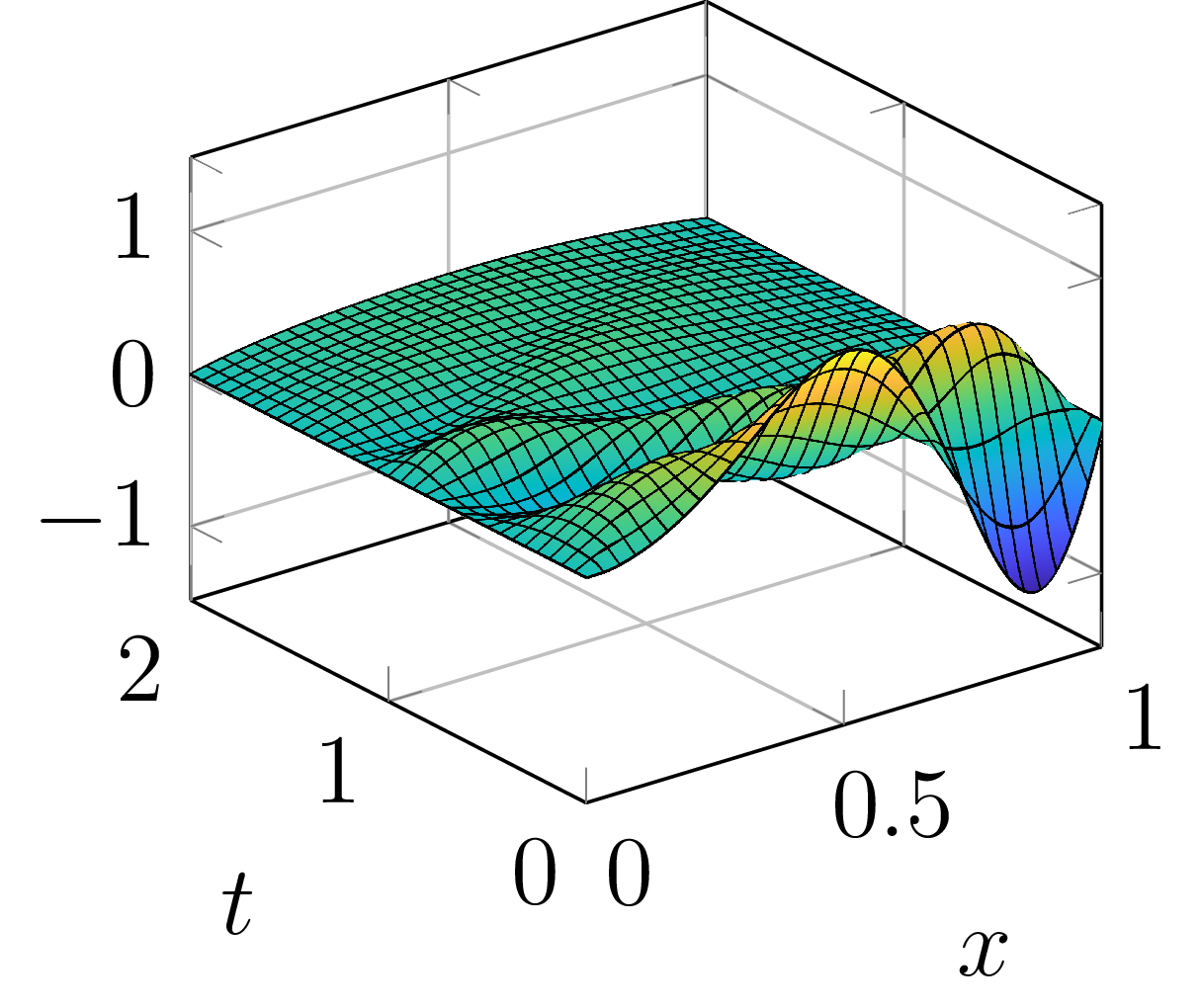}
       \includegraphics[width=0.4\textwidth]{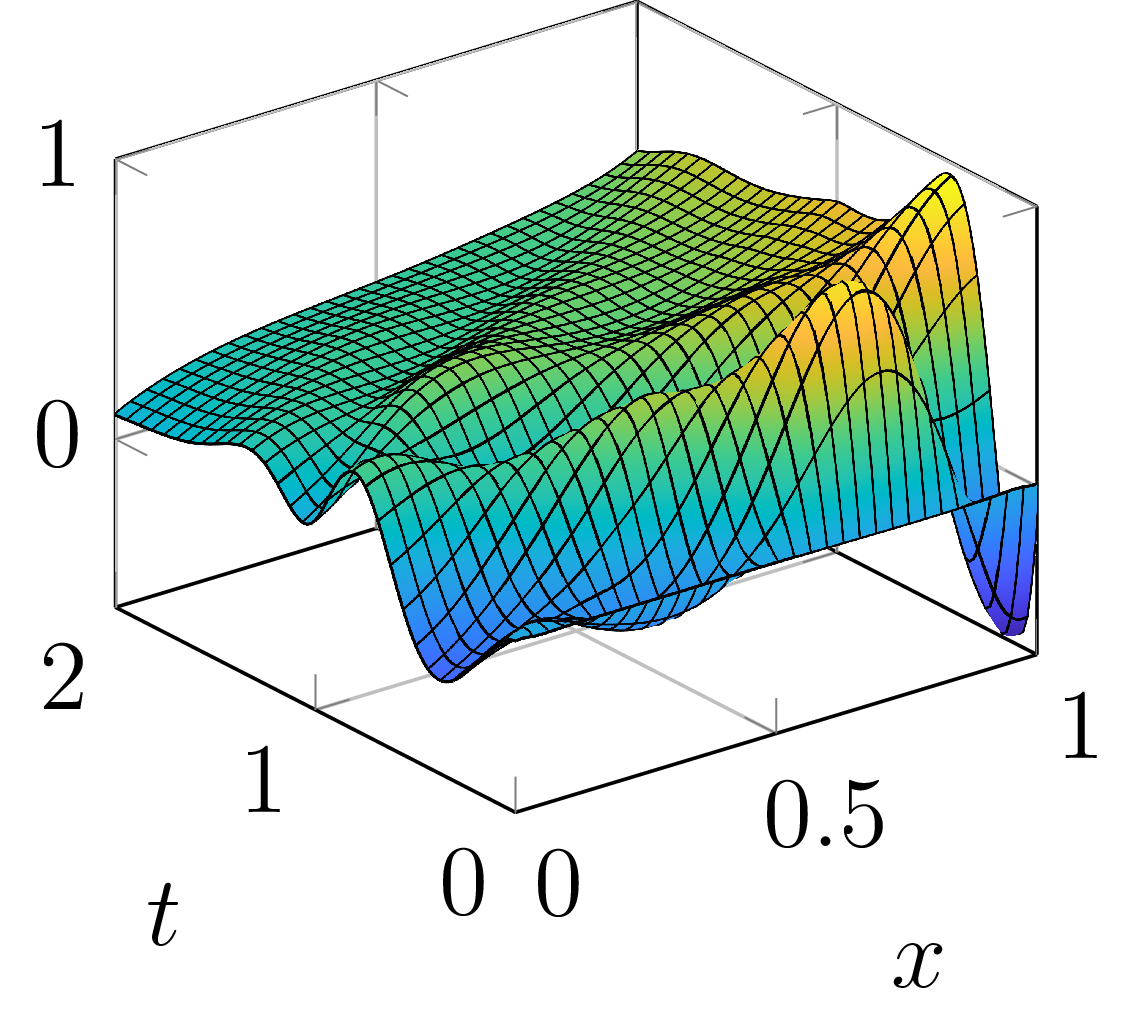}
    \end{center}
    \caption{Solution $u$ (left) and $v$ (right) of Example 3\label{fig:ex3_sol}}
  \end{figure}
  Using the reference solution, we obtain the convergence results shown in Table~\ref{tab:ex3_new}.
  \begin{table}[tb]
    \caption{Convergence results for $U-U_h$ of Example 3\label{tab:ex3_new}}
    \begin{center}
      {\begin{tabular}{crllll}
        \toprule
        $(k,q)$ & $N=M$ & 
        \multicolumn{2}{c}{$E_{\sup}(U-U_h)$} &
        \multicolumn{2}{c}{$\norm{U-U_h}{\rho}$}\\
        \midrule
        $(1,0)$
         &   8 & 4.912e-01 &      & 1.838e-01 &     \\
         &  16 & 3.887e-01 & 0.34 & 1.391e-01 & 0.40\\
         &  32 & 2.629e-01 & 0.56 & 9.786e-02 & 0.51\\
         &  64 & 1.527e-01 & 0.78 & 6.310e-02 & 0.63\\
         & 128 & 8.104e-02 & 0.91 & 3.759e-02 & 0.75\\
        \midrule
        $(2,1)$
         &   8 & 2.333e-01 &      & 7.772e-02 &     \\
         &  16 & 9.896e-02 & 1.24 & 2.498e-02 & 1.64\\
         &  32 & 3.197e-02 & 1.63 & 6.377e-03 & 1.97\\
         &  64 & 9.372e-03 & 1.77 & 1.647e-03 & 1.95\\
         & 128 & 2.709e-03 & 1.79 & 4.799e-04 & 1.78\\
        \midrule
        $(3,2)$
         &   8 & 8.059e-02 &      & 1.982e-02 &     \\
         &  16 & 1.833e-02 & 2.14 & 3.190e-03 & 2.64\\
         &  32 & 3.405e-03 & 2.43 & 7.128e-04 & 2.16\\
         &  64 & 7.663e-04 & 2.15 & 2.611e-04 & 1.45\\
         & 128 & 2.360e-04 & 1.70 & 1.059e-04 & 1.30\\
         \bottomrule
      \end{tabular}}
    \end{center}
  \end{table}
  They clearly demonstrate that we do not achieve the theoretical order of convergence. 
  While the convergence rate seems to approach the optimal order for the lower polynomial degrees,
  this is not true for the higher order case.
  One possible cause of this lack of convergence order may be a lack of regularity in the solution. 
  In this case, the interpolation error estimates are not applicable, and the concept of 
  a reference solution as a substitute for the real solution is imperfect.

\newpage

  \section{Conclusion}
  The existence theory of Rainer Picard for first order systems $(\partial_t M_0+M_1+A)U=F$
  can be extended to problems involving integral operators $T_K$. We considered here the easier case of
  an integral operator without time derivative. The inclusion of $\partial_t T_K$
  is subject of further research. Also, numerical methods obtained for the linear case
  can be extended similarly to this new set of problems and the same convergence orders
  can be proved.
  \bibliographystyle{abbrvurl}
  \bibliography{lit}
\end{document}